\let\pa\partial
\let\na\nabla
\let\eps\varepsilon
\newcommand{\N}{{\mathbb N}}
\newcommand{\R}{{\mathbb R}}
\newcommand{\diver}{\operatorname{div}}
\numberwithin{equation}{section}
\newtheorem{theorem}{Theorem}[section]
\newtheorem{definition}[theorem]{Definition}
\newtheorem{lemma}[theorem]{Lemma}
\begin{document}

\title[Spectral-fractional Cahn--Hilliard cross-diffusion systems]{Global weak solutions to a fractional \\ Cahn--Hilliard cross-diffusion system \\ in lymphangiogenesis}

\author[A. J\"ungel]{Ansgar J\"ungel}
\address{Institute of Analysis and Scientific Computing, Technische Universit\"at, Wiedner Hauptstra\ss e 8--10, 1040 Wien, Austria}
\email{juengel@tuwien.ac.at}

\author[Y. Li]{Yue Li}
\address{Institute of Analysis and Scientific Computing, Technische Universit\"at, Wiedner Hauptstra\ss e 8-10, 1040 Wien, Austria}
\email{yue.li@asc.tuwien.ac.at}

\date{\today}

\thanks{The authors acknowledge partial support from
the Austrian Science Fund (FWF), grants P33010 and F65. This work has received funding from the European Research Council (ERC) under the European Union's Horizon 2020 research and innovation programme, ERC Advanced Grant no.~101018153.}

\begin{abstract}
A spectral-fractional Cahn--Hilliard cross-diffusion system, which describes the pre-patterning of lymphatic vessel morphology in collagen gels, is studied. The model consists of two higher-order quasilinear parabolic equations and describes the evolution of the fiber phase volume fraction and the solute concentration. The free energy consists of the nonconvex Flory--Huggins energy and a fractional gradient energy, modeling nonlocal long-range correlations. The existence of global weak solutions to this system in a bounded domain with no-flux boundary conditions is shown. The proof is based on a three-level approximation scheme, spectral-fractional calculus, and a priori estimates coming from the energy inequality.
\end{abstract}

\keywords{Cahn--Hilliard equation; cross-diffusion equations; spectral-fractional Laplacian; singular potential; free energy; weak solutions.}

\subjclass[2000]{35D30, 35K35, 35K65, 35K67, 92C37.}

\maketitle


\section{Introduction}

The pre-patterning of lymphatic vessel morphology in collagen gels was modeled by Roose and Fowler \cite{RoFo08} using evolution equations for the fiber phase volume fraction $\phi(x,t)$ and the concentration $c(x,t)$ of the solute. The model consists of a cross-diffusion system of Cahn--Hilliard type solved in a bounded domain with no-flux boundary conditions. It was shown in \cite{JuWa24} that a modified form of this model is thermodynamically consistent in the sense that the evolution is a gradient flow associated to the free energy
\begin{align*}
  E_{\rm loc}(\phi,c) = \int_{\R^d}\bigg(\frac12|\na\phi|^2 + f(\phi,c)
  \bigg)dx,
\end{align*}
which is the sum of the correlation energy density $\frac12|\na\phi|^2$ and the interaction-nutrient energy density $f(\phi,c)$. The equations model the separation of the phases $\phi=0$ and $\phi=1$ described by local short-range interactions. Nonlocal long-range interactions may occur as well \cite{Tan22}, leading to nonlocal free energies of the type
\begin{align*}
  E_{\rm nonloc}(\phi,c) = \int_{\R^d}\bigg(\frac14\int_{\R^d}\int_{\R^d}
  J(x-y)(\phi(x)-\phi(y))^2 dxdy + f(\phi,c)\bigg)dx,
\end{align*}
where $J$ is an integral kernel. We refer to \cite{ChFi00,GiLe97} for physical justifications of this approach. We wish to consider nonsmooth kernels close to the Laplacian and choose a fractional Laplacian operator. While there is a natural definition of the fractional Laplacian in the whole space, its definition in bounded domains may differ. In this paper, we use the spectral-fractional Laplacian definition. An existence analysis of the gradient-flow equations associated to the energy $E_{\rm loc}$ was given in \cite{JuLi24}. Our aim is to extend this analysis for the gradient-flow equations associated to a free energy in which the gradient term in $E_{\rm loc}$ is replaced by a fractional gradient term.

\subsection{Model equations}

More precisely, the diffusion system for lymphangiogenesis reads as
\begin{align}
  \pa_t\phi &= \diver\big(M(\phi)(\na\mu - c\na\pa_cf(\phi,c)) \big),
  \label{m1} \\
  \pa_tc &= -\diver\big(cM(\phi)(\na\mu - c\na\pa_c f(\phi,c)) \big)
  + \diver\big(ce^{-\phi}\na\pa_c f(\phi,c) \big), \label{m2} \\
  \mu &= (-\Delta)^s_\Omega \phi+\partial_\phi f(\phi,c)
  \quad\mbox{in }\Omega,\ t>0, \label{m3}
\end{align}
where $\Omega\subset\R^d$ ($d\ge 1$) is a bounded domain, $\pa_c=\pa/\pa c$, $\pa_\phi=\pa/\pa\phi$ are partial derivatives, and $(-\Delta)^s_\Omega$ is the spectral-fractional Laplacian on its domain $D((-\Delta)^s_\Omega)\subset L^2(\Omega)$ with parameter $0<s<1$, defined by the spectral decomposition
\begin{align*}
  (-\Delta)_\Omega^s\phi = \sum_{k=1}^\infty\lambda_k^s
  (\phi,e_k)_{L^2(\Omega)}e_k, \quad \phi\in D((-\Delta)^s_\Omega),
\end{align*}
where $(e_k,\lambda_k)$ are the eigenpairs of the Laplacian $-\Delta$ with homogeneous Neumann boundary conditions; see Section \ref{sec.fract} for details. One may use the Riesz fractional Laplacian as well, but the spectral-fractional Laplacian requires no information about $\phi$ in the exterior $\R^d\setminus\Omega$ \cite{LPG+20}.

The mobility $M(\phi)$ is assumed to be nondegenerate, i.e., it is strictly positive, and the Flory--Huggins energy density is given by
\begin{equation}\label{1.f}
  f(\phi,c) = \phi\log\phi + (1-\phi)\log(1-\phi) + \phi(1-\phi)
  + \frac{c^2}{2} + c(1-\phi) + \log 2.
\end{equation}
The first three terms describe the (nonconvex) Flory--Huggins energy \cite{Flo42,Hug41}, while the last three terms represent the nutrient energy \cite[(2.63)]{GKT22}. Equations \eqref{m1}--\eqref{m3} are supplemented by initial and no-flux boundary conditions,
\begin{align}
  \phi(0) = \phi_0, \quad c(0) = c_0 &\quad\mbox{in }\Omega, \label{ic} \\
  \na\phi\cdot\nu = c\na c\cdot\nu = \na\mu\cdot\nu = 0
  &\quad\mbox{on }\pa\Omega,\ t>0. \label{bc}
\end{align}
The goal of this paper is to prove the existence of global weak solutions.

\subsection{Key ideas}\label{sec.key}

The free energy of system \eqref{m1}--\eqref{m3} reads as
\begin{align*}
  E(\phi,c) = \int_\Omega\bigg(\frac12|(-\Delta)_\Omega^{s/2}\phi|^2
  + f(\phi,c)\bigg)dx.
\end{align*}
Thanks to the thermodynamically consistent modeling, the free energy is a Lyapunov functional and the following energy equality is satisfied:
\begin{align}\label{1.E}
  \frac{dE}{dt}(\phi,c)
  + \int_\Omega M(\phi)|\na\mu-c\na\pa_c f(\phi,c)|^2 dx
  + \int_\Omega ce^{-\phi}|\na\pa_c f(\phi,c)|^2 dx = 0.
\end{align}
Interestingly, the energy dissipation is the same as in the local model of \cite{JuLi24}. However, since $s<1$, we obtain less regularity than in \cite{JuLi24}, which makes the analysis more delicate. Further difficulties are the degeneracy at $c=0$ and the singularity of the potential $\pa_\phi f(\phi,c)$ at $\phi=0$ and $\phi=1$. Unlike \cite{JuLi24}, we assume a nondegenerate mobility, which allows us to overcome the mentioned difficulties.

The main task is the derivation of an a priori bound for $\mu$ in some Sobolev space. Clearly, in the degenerate case, a gradient bound cannot be expected. We first estimate as follows:
\begin{align*}
  \|\na\mu\|_{L^2(0,T;L^{4/3}(\Omega))}
  &\le C\|\na\mu-c\na\pa_cf(\phi,c)\|_{L^2(0,T;L^{4/3}(\Omega))}\\
  &\phantom{xx} +\|\sqrt{c}\|_{L^\infty(0,T;L^4(\Omega))}
  \|\sqrt{c}\na\pa_c f(\phi,c)\|_{L^2(0,T;L^2(\Omega))}.
\end{align*}
The right-hand side is bounded thanks to the energy equality \eqref{1.E} (here we use the positivity of $M(\phi)$). To estimate $\mu$ in $L^1(\Omega)$, we use equation \eqref{m3}:
\begin{align*}
  \int_\Omega \mu dx &= \int_\Omega
  \big((-\Delta)^s_\Omega\phi + \pa_\phi f(\phi,c)\big) dx
 = \int_\Omega \pa_\phi f(\phi,c)dx.
\end{align*}
The idea is to estimate the integral over $\pa_\phi f(\phi,c)$ in terms of $\na\mu$, using \eqref{m3} again and some techniques from \cite{GGG17}. Then
\begin{align*}
  \bigg\|\int_\Omega\mu dx\bigg\|_{L^2(0,T)} \le C\|\na\mu\|_{L^2(0,T;L^{4/3}(\Omega))} + C
  \le C,
\end{align*}
where $C>0$ is a constant independent of the solution. The Poincar\'e--Wirtinger inequality provides a bound for $\mu$ in $L^2(0,T;L^{4/3}(\Omega))$. This is the key estimate for deriving further bounds for $(-\Delta)_\Omega^s\phi$ in $L^2(0,T;L^2(\Omega))$ and for $c^{3/2}$ in $L^2(0,T;W^{1,4/3}(\Omega))$.

Compared to our previous work \cite{JuLi24}, we are able to derive a gradient bound for $\mu$. This was compensated in \cite{JuLi24} by the computation of an entropy inequality yielding an $L^2(\Omega)$ bound for $\Delta\phi$. Such an estimate cannot be expected in our case, and we obtain only a bound for $(-\Delta)_\Omega^s\phi$ in $L^2(\Omega)$ (recall that $s<1$). However, this estimate is sufficient even without the use of the entropy functional.

To derive a bound for $(-\Delta)_\Omega^s\phi$, we split the Flory--Huggins potential in the singular part  $f_1'(\phi)=\log\phi-\log(1-\phi)$ and the regular part $\pa_\phi f_2(\phi,c)=1-2\phi-c$ and multiply \eqref{m3} by $(-\Delta)_\Omega^s\phi$:
\begin{align}\label{1.aux}
  \int_\Omega|&(-\Delta)_\Omega^s\phi|^2 dx
  + \int_\Omega f'_1(\phi)(-\Delta)_\Omega^s\phi dx\\
  &= \int_\Omega\mu(-\Delta)_\Omega^s\phi dx
  + \int_\Omega(c-1+2\phi)(-\Delta)_\Omega^s\phi dx \nonumber\\
  &\le \frac12\|(-\Delta)_\Omega^s\phi\|_{L^2(\Omega)}^2
  + \|\mu\|_{L^2(\Omega)}^2 + \|c-1+2\phi\|_{L^2(\Omega)}^2. \nonumber
\end{align}
The first term on the right-hand side can be absorbed by the left-hand side. Since we already proved that $\mu$ is bounded in $W^{1,4/3}(\Omega)\hookrightarrow L^2(\Omega)$ (the embedding holds in up to four dimensions), the last two terms on the right-hand side are bounded. It was shown in \cite[Theorem 3.3]{STV19} that the second term on the left-hand side of \eqref{1.aux} is nonnegative, which can be interpreted as a weak form of the Stroock--Varapoulos inequality. However, this result only holds true if $f_1''$ is uniformly bounded, which is not true in our case. Therefore, we need to approximate $f_1'$ by some function $f'_{1,\delta}$ with some parameter $\delta>0$. With these arguments, we see that $(-\Delta)_\Omega^s\phi$ is bounded in $L^2(\Omega)$ for an approximation of $\phi$. These are the key estimates for the existence analysis. The limit $\delta\to 0$ can be performed by using the a priori estimates and compactness arguments.

\subsection{Main result}

We first detail our definition of weak solution. Set $\Omega_T=\Omega\times(0,T)$ and $\overline{\phi}_0=\mbox{meas}(\Omega)^{-1}\int_\Omega\phi_0 dx$.

\begin{definition}[Weak solution]\label{def}
Let $T>0$ be arbitrary and let $1/2\le s<1$. The function $(\phi,c)$ is called a {\em weak solution} to problem \eqref{m1}--\eqref{bc} on $[0,T]$ if $(\phi,c)$ satisfies $0<\phi<1$, $c\ge 0$ in $\Omega_T$,
\begin{align*}
  & \phi\in L^\infty(0,T;D((-\Delta)^{s/2}_\Omega))
  \cap L^2(0,T;D((-\Delta)^{s}_\Omega)), \quad
  \mu\in L^2(0,T;W^{1,4/3}(\Omega)), \\
  & c\in L^\infty(0,T;L^2(\Omega)),\quad
  c^{3/2}\in L^{4/3}(0,T;W^{1,4/3}(\Omega)), \\
  & M(\phi)(\nabla\mu-c\na\pa_cf(\phi,c))\in L^2(\Omega_T),
  \quad \sqrt{c}\na\pa_cf(\phi,c)\in L^2(\Omega_T), \\
  & \pa_t\phi\in L^2(0,T;H^{1}(\Omega)'), \quad
  \pa_t c\in L^{3/2}(0,T;W^{1,9}(\Omega)'),
\end{align*}
the initial conditions $\phi(0)=\phi_0$ in $L^2(\Omega)$,
$c(0)=c_0$ in the sense of $H^1(\Omega)'$, $(\phi,c)$ verifies the weak formulation
\begin{align*}
\int_0^T\langle\partial_t\phi,\psi_1\rangle_1dt
   &= -\int_0^T\int_{\Omega}M(\phi)(\nabla\mu-c\na\pa_cf(\phi,c))
  \cdot\nabla\psi_1 dxdt, \\
  \int_0^T\langle\partial_t c,\psi_2\rangle_2 dt
  &= \int_0^T\int_\Omega cM(\phi)(\nabla\mu-c\nabla\partial_cf(\phi,c))
  \cdot\nabla\psi_2dxdt\\
  &\phantom{xx}- \int^T_0\int_\Omega ce^{-\phi}
  \na\pa_cf(\phi,c)\cdot\na\psi_2dxdt
\end{align*}
for all $\psi_1\in L^2(0,T;H^1(\Omega))$, $\psi_2\in L^3(0,T;W^{1,9}(\Omega))$, and $\mu$ satisfies
$$
  \mu = (-\Delta)_\Omega^s \phi+\partial_\phi f(\phi,c)
  \quad\mbox{a.e.\ in }\Omega_T.
$$
Here, $\langle\cdot,\cdot\rangle_1$ is the dual product between $H^1(\Omega)'$ and $H^1(\Omega)$, and $\langle\cdot,\cdot\rangle_2$ is the dual product between $W^{1,9}(\Omega)'$ and $W^{1,9}(\Omega)$.
\end{definition}

Now we state our main result.

\begin{theorem}\label{main}
Assume that $\Omega\subset\R^d$ $(d\le 3)$ is a bounded domain with smooth boundary and let $1/2\leq s<1$, $T>0$. The mobility $M$ is continuous on $\R$ and satisfies 
\begin{align*}
  \gamma\leq M(s)\leq 1/\gamma\quad\mbox{for any }s\in\R
\end{align*} 
for positive constant $\gamma$. Let $\phi_0\in D((-\Delta)^{s/2}_\Omega)$, $c_0\in L^2(\Omega)$
satisfy $0<\overline{\phi}_0<1$, $0\leq\phi_0\leq 1$ and $c_0\ge 0$ in $\Omega$. Then problem \eqref{m1}--\eqref{bc} possesses a weak solution $(\phi,c)$ in $[0,T]$ in the sense of Definition \ref{def}.
\end{theorem}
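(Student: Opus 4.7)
The plan is to construct approximate solutions via a three-level scheme. First, regularize the singular part of the Flory--Huggins potential by replacing $f_1'(\phi)=\log\phi-\log(1-\phi)$ with an extension $f_{1,\delta}'$ defined on all of $\R$ so that $f_{1,\delta}''$ is uniformly bounded (parameter $\delta>0$). Second, add a small $\eps$-regularization to the $c$-equation (for instance, adding $\eps c$ inside the gradient terms that vanish with $c$) so that the coefficients of the cross-diffusion system are nondegenerate and the system can be treated as a quasilinear parabolic problem. Third, use a Faedo--Galerkin approximation based on the spectral basis $\{e_k\}$ of the Neumann Laplacian, which also diagonalizes $(-\Delta)_\Omega^s$ and is therefore natural for handling the spectral-fractional operator. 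Local existence of the Galerkin ODE system is standard, and the energy identity at the approximate level extends the solution to $[0,T]$.

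The second step is to establish, uniformly in $(N,\eps,\delta)$, the a priori estimates from Section~\ref{sec.key}. The approximate energy identity (the analogue of \eqref{1.E}) gives $\phi$ in $L^\infty(0,T;D((-\Delta)^{s/2}_\Omega))$, $c$ in $L^\infty(0,T;L^2(\Omega))$, and $L^2(\Omega_T)$ bounds for $\sqrt{M(\phi)}(\na\mu-c\na\pa_c f)$ and $\sqrt{c}\,\na\pa_c f(\phi,c)$. Using $M\geq\gamma>0$ and the Hölder splitting sketched in Section~\ref{sec.key}, I obtain the bound for $\na\mu$ in $L^2(0,T;L^{4/3}(\Omega))$. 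The $L^1$-bound on $\mu$ comes from integrating \eqref{m3} over $\Omega$, noting $\int_\Omega(-\Delta)^s_\Omega\phi\,dx=0$ by orthogonality with the constant eigenfunction, and controlling $\int_\Omega\pa_\phi f(\phi,c)dx$ by $\|\na\mu\|_{L^{4/3}}$ via the Gajewski--Griepentrog--Galiano-type trick of \cite{GGG17} (testing $\mu-\overline\mu$ against an appropriate bounded function of $\phi$). Poincaré--Wirtinger then yields $\mu\in L^2(0,T;W^{1,4/3}(\Omega))$, and since $d\le 3$ we have $W^{1,4/3}\hookrightarrow L^2$.

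Next, to obtain $(-\Delta)^s_\Omega\phi\in L^2(\Omega_T)$, I test the regularized \eqref{m3} with $(-\Delta)^s_\Omega\phi$ as in \eqref{1.aux}. The crucial nonnegativity $\int_\Omega f_{1,\delta}'(\phi)(-\Delta)^s_\Omega\phi\,dx\ge 0$ follows from the weak Stroock--Varopoulos inequality \cite[Theorem 3.3]{STV19}, which is applicable precisely because the regularized $f_{1,\delta}''$ is bounded. Combined with the $L^2$-bound on $\mu$, this provides the desired estimate. The bound on $c^{3/2}$ in $L^{4/3}(0,T;W^{1,4/3}(\Omega))$ follows by rewriting $\na c^{3/2}=\tfrac{3}{2}\sqrt{c}\,\na c$, using $\na c=\na\pa_c f-\na(1-\phi)$, and combining the $\sqrt{c}\,\na\pa_c f$ bound with the $L^\infty(L^2)$-bound on $\sqrt c$ and the $\na\phi$ estimate from the $D((-\Delta)^{s/2}_\Omega)$ control. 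Dual estimates on $\pa_t\phi$ and $\pa_t c$ are read off from the weak formulation using the bounds already derived.

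The final step is the passage to the limit $N\to\infty$, $\eps\to 0$, $\delta\to 0$, in this order. Compactness of $\phi_\delta$ in $L^2(\Omega_T)$ follows from Aubin--Lions using the $L^2(D((-\Delta)^s_\Omega))$ bound, the compact embedding $D((-\Delta)^{s/2}_\Omega)\hookrightarrow\hookrightarrow L^2(\Omega)$, and $\pa_t\phi_\delta\in L^2(H^1(\Omega)')$; compactness of $c_\delta$ is obtained via the $c^{3/2}$ estimate together with the dual bound on $\pa_t c_\delta$. The main obstacle is the $\delta\to 0$ limit: one must show that the limit $\phi$ satisfies $0<\phi<1$ a.e., so that $f_1'(\phi)$ makes sense and $f_{1,\delta}'(\phi_\delta)\to f_1'(\phi)$ in $L^1(\Omega_T)$. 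This is achieved as in the Abels--Wilke or Elliott--Garcke type argument: the uniform $L^2$ control of $(-\Delta)_\Omega^s\phi_\delta$ and the $L^1$ control of $f_{1,\delta}'(\phi_\delta)$ coming from \eqref{m3} force $\phi_\delta$ to stay strictly in $(0,1)$ in the limit, since otherwise these bounds would blow up; standard truncation and Fatou arguments then upgrade weak to strong convergence of the potential term. The remaining nonlinear terms (products $c_\delta M(\phi_\delta)(\na\mu_\delta-c_\delta\na\pa_c f_\delta)$ and $c_\delta e^{-\phi_\delta}\na\pa_c f_\delta$) are identified through strong convergence of $\phi_\delta$ and $c_\delta$ together with weak convergence of $\na\mu_\delta$ and $\sqrt{c_\delta}\,\na\pa_c f_\delta$, which closes the weak formulation of Definition~\ref{def}.
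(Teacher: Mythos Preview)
Your overall strategy matches the paper's: a three-level approximation (Galerkin, a regularization of the $c$-coefficients, and a $\delta$-smoothing of the singular potential), the energy identity, the $\mu\in L^2(0,T;W^{1,4/3}(\Omega))$ estimate via the mean-value trick of \cite{GGG17} and Poincar\'e--Wirtinger, the $(-\Delta)^s_\Omega\phi\in L^2(\Omega_T)$ estimate by testing \eqref{m3} with $(-\Delta)^s_\Omega\phi$ and invoking the Stroock--Varopoulos-type positivity for the regularized convex part, and finally the $0<\phi<1$ bound from the uniform control of $f_{1,\delta}'(\phi_\delta)$.

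Two points need sharpening. First, your $\eps$-step (``adding $\eps c$ inside the gradient terms that vanish with $c$'') is not the paper's choice and would not close as stated: at the Galerkin level the projection $c_N$ need not be nonnegative, so the dissipation term $\int c_N e^{-[\phi_N]_+^1}|\na\pa_c f|^2$ could have the wrong sign, and the undifferentiated coefficient $c$ in the flux $cM(\phi)(\na\mu-c\na\pa_c f)$ is a priori unbounded. The paper replaces every such coefficient by the two-sided truncation $[c]_+^\eps=\min\{1/\eps,\max\{0,c\}\}$, which simultaneously enforces nonnegativity and an $L^\infty$ bound; it also adds artificial viscosity $\delta\Delta c$ to equation \eqref{m2} and $-\delta\Delta\phi$ to \eqref{m3}, which supplies an $H^1$ bound on $c$ (and on $\phi$) needed for Aubin--Lions at the $N\to\infty$ and $\eps\to0$ stages, before the $c^{3/2}$ estimate becomes available. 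Second, the $\na\phi\in L^2(\Omega_T)$ control you use for the $c^{3/2}$ bound does \emph{not} come from $\phi\in L^\infty(0,T;D((-\Delta)^{s/2}_\Omega))=L^\infty(0,T;H^s(\Omega))$, since $s<1$; it comes from the stronger inclusion $\phi\in L^2(0,T;D((-\Delta)^s_\Omega))\hookrightarrow L^2(0,T;H^1(\Omega))$, valid precisely because $s\ge 1/2$, and is therefore only available \emph{after} the $(-\Delta)^s_\Omega\phi$ estimate.
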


Since we need to define $\na\phi$ in $L^2(\Omega)$ in the weak formulation, the embedding $\phi\in D((-\Delta)_\Omega^s) \hookrightarrow H^1(\Omega)$ is required, explaining the lower bound $s\ge 1/2$. The restriction of the space dimension basically comes from the embedding $W^{1,4/3}(\Omega)\hookrightarrow L^2(\Omega)$ which holds up to $d\le 4$. We believe that our results hold true if $d=4$ at the expense of the integrability of the unknowns. The condition $0<\overline{\phi}_0<1$ is used in Lemma \ref{lem.mudelta} to estimate an approximation of $f'_1(\phi)$ in $L^1(\Omega)$. Note that the initial phases may vanish in certain regions, but their integrals need to be positive.

Concerning the proof of Theorem \ref{main}, the complicated structure of system \eqref{m1}--\eqref{m3} makes a three-level approximation necessary. First, we remove the singularity in $f'(\phi,c)$ by truncation using the parameter $\delta>0$. Furthermore, we add some articial diffusion of order $\delta$ to the equations for $c$ and $\mu$, and we mollify the initial data. Second, we truncate the diffusion coefficient $c$ in the equation \eqref{m2} for the solute concentration by using a parameter $\eps>0$. Third, we solve the approximate problem by the Faedo--Galerkin method involving the dimension $N\in\N$ of the Faedo--Galerkin space.

The Faedo--Galerkin system is solved by applying Peano's theorem. Thanks to the approximate energy equality and the artificial diffusion, we can pass to the limits $N\to\infty$ and $\eps\to 0$. The delicate part of the proof is the limit $\delta\to 0$. For this, we derive additional estimates as described in Section \ref{sec.key} and apply a nonlinear version of the Aubin--Lions compactness lemma in the version of \cite{CJL14} to obtain the existence of a subsequence of an approximating sequence that converges to a weak solution $(\phi,c,\mu)$ to \eqref{m1}--\eqref{bc}.

\subsection{State of the art}

Before we prove Theorem \ref{main}, we briefly comment on the state of the art. While multi-phase models are intensively studied in the literature in the context of, for instance, tumor \cite{EGN21} and biofilm growth \cite{WaZh12} since several years, the two-phase modeling of lymphoangiogenesis is more recent \cite{RoFo08}. It was found in \cite{RoSw11} that a hexagonal lymphatic capillary network is optimal in terms of fluid drainage, confirmed by experiments in mouse tails and human skin. A hexagonal pre-structure was found in numerical simulations in two space dimensions \cite{JuWa24}.

The Cahn--Hilliard equation $\pa_t\phi - \Delta(-\Delta\phi + f'(\phi)) = 0$ with the (possibly singular) potential $f'(\phi)$ was introduced in \cite{CaHi58} to study phase separation in binary alloys. The local and nonlocal equations were derived in \cite{GiLe97} from a lattice gas evolving via Kawasaki exchange with respect to the Gibbs measure for a Hamiltonian. The localization limit was proved both in the nondegenerate \cite{DST21} and degenerate case \cite{ElSk23}.

The existence of one-dimensional solutions to the (local) Cahn--Hilliard equation was first proved in \cite{ElSo86} and later extended to several space dimensions in \cite{ElGa96}. The existence and uniqueness of solutions to the Cahn--Hilliard system strictly depend on the properties of the mobility $M(\phi)$ (being degenerate or nondegenerate) and the potential $f'(\phi)$ (being singular or regular). A mathematical difficulty is the fourth-order derivative, which excludes the use of comparison principles. A sufficient condition for the property $0\le\phi\le 1$ (if satisfied initially) is a degenerate mobility $M(\phi)$ \cite{Yin92} or a singular potential \cite{BlEl91}.

The study of fractional Cahn--Hilliard equations has recently received considerable attention. In \cite{AiMa17}, the fractional version
\begin{align*}
  \pa_t\phi + (-\Delta)_\Omega^s(-\Delta\phi + f'(\phi)) = 0
\end{align*}
was identified as a gradient flow of the free energy $E_{\rm loc}$ in the negative-order fractional Sobolev space $H^{-s}(\Omega)$ with $0<s<1$, and the existence of weak solutions was proved. On the other hand, the $H^{-1}(\Omega)$ gradient flow of the fractional free energy becomes
\begin{align*}
  \pa_t\phi - \Delta\big((-\Delta)_\Omega^s \phi + f'(\phi)\big) = 0,
\end{align*}
which was investigated in a bounded domain with periodic boundary conditions \cite{AiMa17a} and with no-flux boundary conditions \cite{GGG23}. Finally, the double-fractional Cahn--Hilliard equation
\begin{align*}
  \pa_t\phi + (-\Delta)_\Omega^s\big((-\Delta)_\Omega^\sigma \phi + f'(\phi)\big) = 0,
\end{align*}
with $0<s,\sigma<1$ and the singular integral representation of the fractional Laplacian was analyzed in \cite{ASS16,ASS19}. Instead of the fractional Laplacian, fractional powers of self-adjoint monotone operators were considered in \cite{CGS19}. We are not aware of papers on systems of fractional Cahn--Hilliard equations. Thus, up to our knowledge, the existence analysis of the spectral-fractional Cahn--Hilliard cross-diffusion system \eqref{m1}--\eqref{m3} is new.


\medskip
The paper is organized as follows. We recall the definition and some properties of the spectral-fractional Laplacian in Section \ref{sec.fract}. The approximate solution in the Faedo--Galerkin space and the limit $N\to\infty$ are proved in Section \ref{sec.approx}. The limit $\eps\to 0$ is performed in Section \ref{sec.eps}, while the more involved limit $\delta\to 0$ is shown in Section \ref{sec.delta}.


\section{The spectral-fractional Laplacian}\label{sec.fract}

We introduce the spectral-fractional Laplacian and recall some of its properties. We refer to \cite[Sec.~2.1]{GGG23} for details. The operator $(-\Delta)_\Omega$ denotes the Laplace operator $-\Delta$ with homogeneous Neumann boundary conditions with domain
\begin{align*}
  D((-\Delta)_\Omega) := \{u\in H^2(\Omega):\na u\cdot\nu=0\mbox{ on }
  \pa\Omega\}.
\end{align*}
By spectral theory, there exists a sequence of real nonnegative eigenvalues $(\lambda_k)_{k\in\N}$ satisfying $\lambda_0=0$, $\lambda_k\le\lambda_{k+1}$ for $k\in\N$, and $\lambda_k\to\infty$ as $k\to\infty$. The sequence of associated eigenfunctions $(e_k)_{k\in\N}\subset D((-\Delta)_\Omega)$ form an orthonormal basis of $L^2(\Omega)$. The eigenfunctions verify $(-\Delta)_\Omega e_k=\lambda_k e_k$ in $\Omega$ and $\overline{e}_k:=|\Omega|^{-1}\int_\Omega e_k dx=0$ for $k\in\N$. Any function $u\in D((-\Delta)_\Omega)$ can be represented by the series
\begin{align*}
  (-\Delta)_\Omega u = \sum_{k=1}^\infty
  \lambda_k(u,e_k)_{L^2(\Omega)}e_k.
\end{align*}
Based on this spectral decomposition, we define the (positive) fractional powers of order $s\in(0,1)$ by
\begin{align*}
  (-\Delta)^s_\Omega u = \sum_{k=1}^\infty
  \lambda_k^s(u,e_k)_{L^2(\Omega)}e_k \quad\mbox{for }
  u\in D((-\Delta)^s_\Omega),
\end{align*}
where the domain of $(-\Delta)^s_\Omega$ is given by
\begin{align*}
  D((-\Delta)^s_\Omega) := \bigg\{u\in L^2(\Omega):
  (-\Delta)^s_\Omega u\in L^2(\Omega)\mbox{ and }
  \int_\Omega(-\Delta)^s_\Omega u dx = 0\bigg\}.
\end{align*}
By definition, we have for $s,\sigma>0$ and $u\in D((-\Delta)^s_\Omega)\cap D((-\Delta)_\Omega^\sigma)$,
\begin{align*}
  \int_\Omega(-\Delta)_\Omega^s u(-\Delta)_\Omega^\sigma u dx
  = \int_\Omega|(-\Delta)_\Omega^{(s+\sigma)/2}u|^2 dx.
\end{align*}
With the norm of $u\in D((-\Delta)^s_\Omega)$,
\begin{align*}
  \|u\|_{D((-\Delta)^s_\Omega)}^2
  = \frac{1}{|\Omega|}\bigg(\int_\Omega udx\bigg)^2
  + \sum_{k=1}^\infty\lambda_k^{2s}|(u,e_k)_{L^2(\Omega)}|^2,
\end{align*}
the space $D((-\Delta)^s_\Omega)$ becomes a Banach space. This space can be related to the Sobolev--Slobodeckij space
\begin{align*}
  H^{2s}(\Omega) = \bigg\{u\in L^2(\Omega): \int_\Omega\int_\Omega
  \frac{|u(x)-u(y)|^2}{|x-y|^{d+2s}}dxdy<\infty\bigg\}, \quad
  0<s<1.
\end{align*}
Indeed, let
\begin{align}\label{2.Ds}
  D^s(\Omega) := D((-\Delta)_\Omega^{s/2})\quad\mbox{for }s>0.
\end{align}
Then it holds that \cite[(21)]{GGG23}
\begin{align*}
  D^s(\Omega) &= H^{s}(\Omega)
  &&\quad\mbox{if }0<s<\frac32, \\
  D^s(\Omega) &= \{u\in H^{3/2}(\Omega):\textstyle\int_\Omega
  \operatorname{dist}(x,\pa\Omega)^{-1}u(x)^2dx<\infty\}
  &&\quad\mbox{if }s=\frac32, \\
  D^s(\Omega) &= \{u\in H^{s}(\Omega):\na u\cdot\nu=0
  \mbox{ on }\pa\Omega\}
  &&\quad\mbox{if }\frac32<s<1.
\end{align*}
The identification with $H^{s}(\Omega)$ allows us to use standard Sobolev embedding theorems. In particular, for $p\ge 2$, the embedding
\begin{align*}
  D^s(\Omega) \hookrightarrow
  L^p(\Omega)\quad\mbox{for }s\ge \frac{d}{2}-\frac{d}{p},
\end{align*}
is continuous, and it is compact if $s>d/2-d/p$.

The following lemma, proved in \cite[Lemma A.1]{GGG23}, resembles the Stroock--Varopoulos inequality for the (singular integral representation) of the fractional Laplacian in $\R^d$ \cite[Theorem 3.3]{STV19}.

\begin{lemma}\label{lem.pos}
Let $0<s<1$ and let $F\in C^2(\R)$ be a convex function satisfying $F(1/2) = F'(1/2)=0$ and there exists $C>0$ such that $|F''(s)|\le C$ for any $s\in\R$. Then, for any $u\in D^{2s}(\Omega)$,
\begin{align*}
  \int_\Omega F'(u)(-\Delta)_\Omega^s u dx \ge 0.
\end{align*}
\end{lemma}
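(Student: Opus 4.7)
The strategy is to pass to the heat semigroup via the Balakrishnan subordination formula
\begin{align*}
  (-\Delta)_\Omega^s u \;=\; \frac{s}{\Gamma(1-s)}\int_0^\infty t^{-s-1}(u-P_t u)\,dt,
\end{align*}
where $P_t:=e^{t\Delta_\Omega}$ is the Neumann heat semigroup and the integral converges (in the improper sense) in $L^2(\Omega)$ for $u\in D((-\Delta)_\Omega^s)=D^{2s}(\Omega)$. Under the hypotheses on $F$, the Lipschitz bound $|F'(u)|\le C|u-1/2|$ makes $F'(u)\in L^2(\Omega)$ and $v\mapsto\int_\Omega F'(u)v\,dx$ a bounded linear functional on $L^2(\Omega)$. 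After truncating the time integral to $t\in[\eta,T]$, where the Bochner integral is absolutely convergent, and swapping functional with integral, it suffices to establish
\begin{align*}
  I(t):=\int_\Omega F'(u)(u-P_t u)\,dx\ge 0\quad\text{for every }t>0,
\end{align*}
and then let $\eta\to 0$ and $T\to\infty$.

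For the pointwise bound on $I(t)$ I would use the Neumann heat kernel $p_t(x,y)$, which on a bounded smooth domain is continuous, nonnegative, symmetric $p_t(x,y)=p_t(y,x)$, and normalized $\int_\Omega p_t(x,y)\,dy=1$. Writing $P_tu(x)=\int_\Omega p_t(x,y)u(y)\,dy$, inserting the trivial identity $u(x)=u(x)\int_\Omega p_t(x,y)\,dy$ and symmetrizing in $x\leftrightarrow y$ yields
\begin{align*}
  I(t)=\frac12\int_\Omega\!\int_\Omega p_t(x,y)\bigl(F'(u(x))-F'(u(y))\bigr)\bigl(u(x)-u(y)\bigr)\,dx\,dy.
\end{align*}
Convexity of $F$ makes $F'$ nondecreasing, so the last product is nonnegative pointwise; combined with $p_t\ge 0$ this gives $I(t)\ge 0$, and monotone convergence then yields
\begin{align*}
  \int_\Omega F'(u)(-\Delta)_\Omega^s u\,dx=\frac{s}{\Gamma(1-s)}\int_0^\infty t^{-s-1}I(t)\,dt\ge 0.
\end{align*}

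The main obstacle is that the Balakrishnan integral is in general only improperly (not absolutely) convergent in $L^2(\Omega)$: the sharp bound $\|u-P_tu\|_{L^2(\Omega)}\le t^s\|(-\Delta)_\Omega^su\|_{L^2(\Omega)}$ gives $t^{-s-1}\|u-P_tu\|_{L^2(\Omega)}\sim t^{-1}$ near $t=0$, which is not integrable. The cleanest way around this is a density argument: first prove the inequality for a spectral truncation $u_N=\sum_{k=1}^N(u,e_k)_{L^2(\Omega)}e_k$, where the whole chain of Fubini swaps becomes elementary because only finitely many modes are active; then pass to the limit $N\to\infty$ using $u_N\to u$ in $D((-\Delta)_\Omega^s)$, the Lipschitz property of $F'$ (so that $F'(u_N)\to F'(u)$ in $L^2(\Omega)$), and $(-\Delta)_\Omega^su_N\to(-\Delta)_\Omega^su$ in $L^2(\Omega)$. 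The sign of the integral is preserved in the limit, which completes the argument.
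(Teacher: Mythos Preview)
Your argument is correct. The paper does not give its own proof of this lemma but cites \cite[Lemma A.1]{GGG23}, so there is no in-paper argument to compare against; the heat-semigroup/Balakrishnan route you propose is the standard way to establish Stroock--Varopoulos-type inequalities for spectral fractional operators, and it is essentially what the cited reference does.

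One small fix: your spectral truncation $u_N=\sum_{k=1}^N(u,e_k)_{L^2(\Omega)}e_k$ omits the constant mode $e_0$, so $u_N\to u-\overline{u}\,e_0$ in $L^2(\Omega)$ rather than $u_N\to u$. Since $(-\Delta)_\Omega^s$ annihilates constants this does not affect $(-\Delta)_\Omega^s u_N\to(-\Delta)_\Omega^s u$, but it does mean $F'(u_N)\to F'(u-\overline{u}\,e_0)$, and the limiting inequality would be for the wrong argument. Include $k=0$ in the truncation; then $u_N\to u$ in $L^2(\Omega)$, the Balakrishnan integral for $u_N$ is absolutely convergent in $L^2(\Omega)$ (the small-$t$ bound improves to $\|u_N-P_tu_N\|_{L^2(\Omega)}\le C_N t$ since only finitely many modes are active), all Fubini swaps and the heat-kernel symmetrization are justified, and the limit $N\to\infty$ preserves the sign via the Lipschitz bound on $F'$.
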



\section{Approximate solutions}\label{sec.approx}

We construct a regularized problem associated to \eqref{m1}--\eqref{bc} by approximating the singular part of the energy density and truncating the concentration-dependent diffusion coefficients. To this end, we split the free energy density \eqref{1.f} into a convex part $f_1$ and a nonconvex part $f_2$,
$$
  f_1(\phi) = \phi\log\phi + (1-\phi)\log(1-\phi) + \log 2, \quad
  f_2(\phi,c) = \phi(1-\phi) + \frac{c^2}{2} + c(1-\phi).
$$
As in \cite[(3.5)]{GGG17} (but different from \cite{JuLi24}), we define an approximation $f_{1,\delta}$ of $f_1$ on $\R$ to remove the singularities at $\phi=0$ and $\phi=1$. There exists a convex function $f_{1,\delta}:\R\to\R$ such that $f_{1,\delta}'$ is Lipschitz continuous on $\R$ with constant $1/\delta$ and $f_{1,\delta}(1/2)=f_{1,\delta}'(1/2)=0$. Additionally, $f_{1,\delta}\nearrow f_1$ in $[0,1]$, $|f'_{1,\delta}|\nearrow|f'_1|$ in $(0,1)$ as $\delta\to 0$, and for any $\delta^*>0$, there exists $C^*>0$ such that
\begin{align*}
  f_{1,\delta}(s) \ge \frac{s^2}{4\delta^*} - C^*
  \quad\mbox{for any }s\in\R\mbox{ and }0<\delta\le\delta^*.
\end{align*}
We set
$$
  f_\delta(\phi,c) = f_{1,\delta}(\phi) + f_2(\phi,c).
$$
Finally, we introduce the truncations
$$
  [\phi]_+^1 = \min\{1,\max\{0,\phi\}\}, \quad
  [c]_+^\eps = \min\{1/\eps,\max\{0,c\}\},
$$
where $0<\eps<1$. Then our approximate system reads as
\begin{align}
  \partial_t\phi &= \diver\big(M(\phi)(\nabla\mu-[c]_+^\eps
  \nabla\partial_cf_\delta(\phi,c)) \big),\label{3.approx1} \\
  \partial_t c &= -\diver\big([c]_+^\eps
  M(\phi)(\nabla\mu-[c]_+^\eps\nabla
  \partial_cf_\delta(\phi,c)) \big)
  + \diver\big([c]_+^\eps e^{-[\phi]^1_+}\nabla
  \partial_cf_\delta(\phi,c) \big)+\delta\Delta c, \label{3.approx2} \\
  \mu &= (-\Delta)_\Omega^s\phi + \partial_\phi f_\delta(\phi,c)
  - \delta\Delta\phi, \label{3.approx3}
\end{align}
with the initial and homogeneous Neumann boundary conditions
\begin{align}
  \phi(0)=\phi_{0,\delta},\quad c(0)=c_0
  & \quad\mbox{in }\Omega, \label{3.ic} \\
  \nabla\phi\cdot\nu = \nabla\mu\cdot\nu = \nabla c\cdot\nu = 0
  & \quad \mbox{on }(0,T)\times\partial\Omega. \label{3.bc}
\end{align}
Here, $\phi_{0,\delta}\subset H^1(\Omega)$ is an approximation of $\phi_0$ satisfying $0\le\phi_{0,\delta}\le 1$ in $\Omega$, $0<\overline{\phi}_{0,\delta}<1$, and for any $1\le p<\infty$,
\begin{align*}
  \phi_{0,\delta}\to \phi_0 \quad\mbox{strongly in }D^s(\Omega)
  \cap L^p(\Omega)\quad\mbox{as }\delta\to 0,
\end{align*}
recalling Definition \ref{2.Ds} of $D^s(\Omega)$. The remainder of this section is devoted to the solvability of the approximate problem \eqref{3.approx1}--\eqref{3.bc}.

\subsection{Faedo--Galerkin method}\label{sec.N}

Let $(e_k)_{k\in\N}$ be a complete orthonormal set of eigenfunctions of the Laplacian with homogeneous Neumann boundary conditions in $L^2(\Omega)$ and set $X_N=\operatorname{span}\{e_1,\ldots,e_N\}$ for $N\in\N$. Proceeding as in \cite[Sec.~2.1]{JuLi24}, there exists $T'>0$ and $(\phi_N,c_N,\mu_N)\in C^0([0,T'];X_N^3)$ solving
\begin{align}
  \int_{\Omega}\partial_t\phi_N edx
  &= -\int_\Omega M(\phi_N)\big(\nabla \mu_N-[c_N]_+^\eps
  \nabla \partial_c f_\delta(\phi_N,c_N)\big)\cdot\nabla edx,
  \label{a13} \\
  \int_\Omega \partial_t c_N edx
  &= \int_\Omega [c_N]_+^\eps M(\phi_N)\big(\na \mu_N-[c_N]_+^\eps\na \partial_c f_\delta(\phi_N,c_N)\big)\cdot
  \nabla edx \label{a14} \\
  &\phantom{xx} -\int_\Omega [c_N]_+^\eps e^{-[\phi_N]_+^1}
  \nabla\partial_c f_\delta(\phi_N,c_N)\cdot\nabla edx
  - \delta\int_\Omega \nabla c_N\cdot \nabla edx, \nonumber \\
  \int_\Omega \mu_N edx
  &= \int_{\Omega}(-\Delta)^{s/2}_\Omega\phi_N(-\Delta)^{s/2}_\Omega edx
  + \int_\Omega \partial_\phi f_\delta(\phi_N,c_N)edx
  +\delta\int_\Omega \nabla \phi_N\cdot\nabla edx,\label{a15}
\end{align}
for any $e\in X_N$, with initial conditions
\begin{align}\label{3.icN}
  \phi_N(0)=\sum_{k=0}^N(\phi_{0,\delta},e_k)_{L^2(\Omega)}e_k,\quad c_N(0)=\sum_{k=0}^N(c_0,e_k)_{L^2(\Omega)}e_k.
\end{align}
We wish to extend the solution globally in time. For this, we need bounds for $(\phi_N,c_N)$ in $X_N$. We start with the derivation of the approximate energy equality.

\begin{lemma}[Approximate energy equality]
Let $(\phi_N,c_N,\mu_N)\in C^0([0,T'];X_N^3)$ be the solution to \eqref{a13}--\eqref{3.icN}. Then
\begin{align}\label{3.energy}
  \frac{d}{dt}&\int_\Omega\bigg(\frac{1}{2}
  |(-\Delta)^{s/2}_\Omega\phi_N|^2
  + f_\delta(\phi_N,c_N)+\frac{\delta}{2}|\nabla\phi_N|^2 \bigg)dx \\
  &\phantom{xx}+ \int_\Omega M(\phi_N)\big|\nabla\mu_N
  - [c_N]_+^\eps\nabla\partial_c f_\delta(\phi_N,c_N)\big|^2 dx
  \nonumber \\
  &\phantom{xx}+ \int_\Omega [c_N]_+^\eps e^{-[\phi_N]^1_+}
  |\nabla\partial_c f_\delta(\phi_N,c_N)|^2dx \nonumber \\
  &= -\delta\int_\Omega\na c_N\cdot\na(c_N-\phi_N)dx. \nonumber
\end{align}
\end{lemma}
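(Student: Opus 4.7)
After expansion in the basis $(e_k)$, the Galerkin system \eqref{a13}--\eqref{a15} reduces to an ODE in finitely many variables whose right-hand side is locally Lipschitz (the only nonlinearities are the smooth truncations $[\cdot]_+^\eps$ and $[\cdot]_+^1$, the continuous mobility $M$, the Lipschitz function $\partial_\phi f_\delta$ coming from the regularization of $f_1'$, and the affine $\partial_c f_\delta = c_N + 1 - \phi_N$). The Peano solution is therefore $C^1$ on $[0,T']$, so $\partial_t\phi_N(t), \partial_t c_N(t)\in X_N$ at every $t$, and in particular $\frac{d}{dt}\|(-\Delta)^{s/2}_\Omega\phi_N\|^2_{L^2(\Omega)} = 2\int_\Omega (-\Delta)^{s/2}_\Omega\phi_N\,(-\Delta)^{s/2}_\Omega\partial_t\phi_N\,dx$ by linearity of $(-\Delta)^{s/2}_\Omega$. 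This is the regularity required for every subsequent step.

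The strategy is to test each Galerkin equation with a tailored element of $X_N$: $e=\partial_t\phi_N$ in \eqref{a15}, $e=\mu_N$ in \eqref{a13}, and $e=\partial_c f_\delta(\phi_N,c_N) = c_N + 1 - \phi_N$ in \eqref{a14}; the last being admissible because the constant function $1$ lies in $X_N$ (consistent with the index $k=0$ in the initial data \eqref{3.icN}). The first identity rearranges to
\begin{align*}
  \int_\Omega\mu_N\,\partial_t\phi_N\,dx
  = \frac{d}{dt}\int_\Omega\bigg(\tfrac12|(-\Delta)^{s/2}_\Omega\phi_N|^2 + \tfrac\delta2|\nabla\phi_N|^2\bigg)dx + \int_\Omega\partial_\phi f_\delta(\phi_N,c_N)\,\partial_t\phi_N\,dx,
\end{align*}
while the second yields the same quantity as $-\int_\Omega M(\phi_N)(\nabla\mu_N - [c_N]_+^\eps\nabla\partial_c f_\delta)\cdot\nabla\mu_N\,dx$. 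The third produces $\int_\Omega\partial_c f_\delta\,\partial_t c_N\,dx$ on the left and the three flux contributions from \eqref{a14} on the right.

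Equating the first two identities and adding the third, the chain rule $\partial_\phi f_\delta\,\partial_t\phi_N + \partial_c f_\delta\,\partial_t c_N = \partial_t f_\delta(\phi_N,c_N)$ (valid pointwise since $f_\delta\in C^{1,1}$) produces $\frac{d}{dt}\int_\Omega f_\delta\,dx$ on the left. On the right, the two ``mixed'' terms $\pm\int_\Omega[c_N]_+^\eps M(\nabla\mu_N - [c_N]_+^\eps\nabla\partial_c f_\delta)\cdot\nabla\partial_c f_\delta\,dx$ merge with $-\int_\Omega M(\nabla\mu_N - [c_N]_+^\eps\nabla\partial_c f_\delta)\cdot\nabla\mu_N\,dx$ into the perfect square $-\int_\Omega M(\phi_N)|\nabla\mu_N - [c_N]_+^\eps\nabla\partial_c f_\delta|^2\,dx$; the exponential-weighted term reproduces itself; and the artificial viscosity term in \eqref{a14} simplifies via $\nabla\partial_c f_\delta = \nabla(c_N - \phi_N)$ to give exactly the right-hand side $-\delta\int_\Omega\nabla c_N\cdot\nabla(c_N-\phi_N)\,dx$ of \eqref{3.energy}. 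The only non-routine point is the admissibility of the test function $\partial_c f_\delta$, which hinges on $1\in X_N$; should the basis exclude the constant mode, one would instead test with $c_N-\phi_N\in X_N$ and separately invoke mass conservation (taking $e\equiv 1$, which is meaningful because all fluxes vanish on $\partial\Omega$) to reinsert the missing constant contribution. Aside from this bookkeeping, the proof is a routine chain-rule identity.
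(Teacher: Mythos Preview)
Your proof is correct and follows exactly the same approach as the paper: the authors indicate that one should choose the test functions $e=\mu_N$ in \eqref{a13}, $e=\partial_c f_\delta(\phi_N,c_N)$ in \eqref{a14}, and $e=\partial_t\phi_N$ in \eqref{a15}, and add the resulting identities, which is precisely what you do. Your treatment is in fact more detailed than the paper's sketch, and your remark on the admissibility of the constant test function (and the fallback via mass conservation) is a useful observation on a minor indexing ambiguity in the paper.
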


\begin{proof}
The proof is very similar to that one in \cite[Lemma 3.1]{JuLi24} with the exception that we have $(-\Delta)_\Omega^s$ instead of $-\Delta$. The idea is to choose the test functions $e=\mu_N$ in \eqref{a13}, $e=\pa_c f_\delta(\phi_N,c_N)$ in \eqref{a14}, $e=\pa_t\phi_N$ in \eqref{a15}, and adding the corresponding equations.
\end{proof}

\subsection{Uniform estimates in $N$}

We deduce from the approximate energy equality \eqref{3.energy} the following uniform estimates.

\begin{lemma}[Estimates for $\phi_N$ and $c_N$]\label{lem.est}
There exists a constant $C>0$ independent of $N$ such that
\begin{align}
  \big\|\nabla \mu_N-[c_N]_+^\eps\nabla
  \partial_cf_\delta(\phi_N,c_N)\big\|_{L^2(\Omega\times(0,T'))}
  &\leq C, \label{3.muN} \\
  \big\|([c_N]_+^\eps)^{1/2}\nabla\partial_c f_\delta(\phi_N,c_N)
  \big\|_{L^2(\Omega\times(0,T'))}
  &\leq C, \label{3.sqrtcN} \\
  \|c_N\|_{L^\infty(0,T';L^2(\Omega))}
  + \sqrt{\delta}\|\nabla c_N\|_{L^2(\Omega\times(0,T')}
  &\leq C, \label{3.nacN}\\
  \|\phi_N\|_{L^\infty(0,T';D^s(\Omega))}
  + \sqrt{\delta}\|\phi_N\|_{L^\infty(0,T';H^1(\Omega))} &\leq C. \label{3.phiN}
\end{align}
\end{lemma}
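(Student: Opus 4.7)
The plan is to integrate the approximate energy equality \eqref{3.energy} in time and extract the four estimates from the resulting integral inequality. The first task is to handle the remainder $-\delta\int_\Omega \na c_N\cdot\na(c_N-\phi_N)\,dx$ on the right of \eqref{3.energy}. I would expand it, apply Young's inequality to the mixed term, and absorb half of $\delta\|\na c_N\|_{L^2(\Omega)}^2$, so that the right-hand side is bounded above by $-\tfrac{\delta}{2}\|\na c_N\|_{L^2(\Omega)}^2 + \tfrac{\delta}{2}\|\na\phi_N\|_{L^2(\Omega)}^2$. Moving the negative term to the left leaves $\tfrac{\delta}{2}\|\na\phi_N\|_{L^2(\Omega)}^2$ on the right, to be dealt with by Gronwall.

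Next I would establish coercivity of the free energy uniformly in $N$. Using the quadratic lower bound $f_{1,\delta}(s)\ge s^2/(4\delta^*) - C^*$ with some fixed $\delta^*\ge\delta$, and Young's inequality to dominate the signed contributions $\phi_N(1-\phi_N)$ and $c_N(1-\phi_N)$ of $f_2$, I would derive
\begin{align*}
  \int_\Omega f_\delta(\phi_N,c_N)\,dx \ge \kappa\big(\|\phi_N\|_{L^2(\Omega)}^2 + \|c_N\|_{L^2(\Omega)}^2\big) - C
\end{align*}
for some $\kappa>0$ independent of $N$. Conservation of the mean of $\phi_N$, obtained by testing \eqref{a13} with a constant, combined with the $(-\Delta)_\Omega^{s/2}\phi_N$ bound from the energy, then produces the full $D^s(\Omega)$-norm needed in \eqref{3.phiN}.

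The initial free energy $E_\delta(\phi_N(0),c_N(0))$ is bounded uniformly in $N$ because $\phi_N(0),c_N(0)$ are the $L^2$-orthogonal projections of $\phi_{0,\delta}\in D^s(\Omega)\cap H^1(\Omega)$ and $c_0\in L^2(\Omega)$, and $f_{1,\delta}$ is globally Lipschitz on $\R$. Integrating \eqref{3.energy} on $(0,t)$ for $t\le T'$ and applying Gronwall's lemma to absorb $\tfrac{\delta}{2}\int_0^t\|\na\phi_N\|_{L^2(\Omega)}^2 ds$ against the left-hand $\tfrac{\delta}{2}\|\na\phi_N(t)\|_{L^2(\Omega)}^2$ then yields \eqref{3.nacN} and \eqref{3.phiN}. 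The assumption $M(\phi_N)\ge\gamma$ extracts \eqref{3.muN} from the first dissipation term in \eqref{3.energy}, while $[\phi_N]_+^1\in[0,1]$ gives $e^{-[\phi_N]_+^1}\ge e^{-1}$ and hence \eqref{3.sqrtcN}.

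The main obstacle is the coercivity step: since $\phi_N$ is only a Galerkin projection and is not a priori confined to $[0,1]$, and since the nonconvex terms in $f_2$ can be arbitrarily negative, coercivity of $E_\delta$ is not automatic. The freedom to choose $\delta^*$ small in the quadratic lower bound on $f_{1,\delta}$ is precisely the tool that compensates for these negative contributions. Once coercivity is secured, the remaining steps follow a standard energy-plus-Gronwall pattern, with the constants depending on $\delta$ and $\eps$ (which is acceptable at this level of the approximation scheme).
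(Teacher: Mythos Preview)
Your proposal is correct and follows essentially the same approach as the paper: Young's inequality on the $\delta$-remainder, Gronwall to absorb $\tfrac{\delta}{2}\|\nabla\phi_N\|_{L^2}^2$, coercivity of $f_\delta$ via the quadratic lower bound on $f_{1,\delta}$ with $\delta^*$ chosen small, and the lower bounds $M\ge\gamma$ and $e^{-[\phi_N]_+^1}\ge e^{-1}$ to extract the two dissipation estimates. Your additional remarks on mass conservation for the full $D^s$-norm and on the uniform bound of the initial energy are details the paper leaves implicit but handles the same way.
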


\begin{proof}
 The right-hand side of \eqref{3.energy} is estimated according to Young's inequality as
\begin{align*}
  -\delta\int_\Omega\na c_N\cdot\na(c_N-\phi_N)dx
  \le -\frac{\delta}{2}\int_\Omega|\nabla c_N|^2dx
  + \frac{\delta}{2}\int_\Omega |\nabla\phi_N|^2dx.
\end{align*}
The last term on the right-hand side can be estimated via Gronwall's inequality from the energy, while the other term is nonpositive. Next, by construction of $f_{1,\delta}$,
\begin{align*}
  f_\delta(\phi_N,c_N) &= f_{1,\delta}(\phi_N)
  + \phi_N(1-\phi_N) + \frac{c_N^2}{2} + c_N(1-\phi_N) \\
  &\ge \bigg(\frac{1}{4\delta^*}-C\bigg)\phi_N^2
  + \frac{c_N^2}{4} - C \ge C(\phi_N^2+c_N^2- 1),
\end{align*}
choosing $\delta^*>0$ sufficiently small. Then, taking into account the positive lower bound $M(\phi_N)\ge \gamma>0$, estimates \eqref{3.muN}--\eqref{3.phiN} follow from the energy equality, finishing the proof.
\end{proof}

The uniform bounds for $\phi_N$ and $c_N$ in $X_N$ imply the global existence of solutions for system \eqref{a13}--\eqref{3.icN}. Consequently, estimates \eqref{3.muN}--\eqref{3.phiN} are valid on the interval $[0,T]$ for any $T>0$. To take the limit $N\to\infty$, we also need an estimate for $\mu_N$.

\begin{lemma}[Estimate for $\mu_N$]\label{lem.muN}
There exists a constant $C>0$ independent of $N$ (but possibly depending on $\delta$ and $\eps$) such that
\begin{align}\label{f2}
  \|\mu_N\|_{L^2(0,T;H^1(\Omega))}\leq C.
\end{align}
\end{lemma}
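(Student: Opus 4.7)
The plan is to combine a gradient bound $\|\na\mu_N\|_{L^2(\Omega_T)}\le C(\delta,\eps)$ with an $L^\infty$-in-time bound on the spatial mean $\bar\mu_N:=|\Omega|^{-1}\int_\Omega \mu_N\,dx$; the Poincar\'e--Wirtinger inequality then closes the lemma. Since we only need uniformity in $N$ at this stage, it is harmless that $C$ blows up as $\delta,\eps\to 0$.

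For the gradient I would use the splitting
\[
  \na\mu_N = \big(\na\mu_N-[c_N]_+^\eps\na\pa_c f_\delta(\phi_N,c_N)\big) + [c_N]_+^\eps\na\pa_c f_\delta(\phi_N,c_N).
\]
The first summand is controlled in $L^2(\Omega_T)$ by \eqref{3.muN}, which is a direct consequence of the energy equality \eqref{3.energy} and the nondegeneracy $M(\phi_N)\ge\gamma$. For the second, the truncation $0\le[c_N]_+^\eps\le 1/\eps$ gives $[c_N]_+^\eps\le\eps^{-1/2}([c_N]_+^\eps)^{1/2}$, and \eqref{3.sqrtcN} then produces a bound of order $\eps^{-1/2}$.

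For the mean I would test \eqref{a15} with the constant eigenfunction $e_0$. Since $(-\Delta)_\Omega^{s/2}$ annihilates constants and $\na 1 = 0$, this reduces to
\[
  \int_\Omega\mu_N\,dx = \int_\Omega \pa_\phi f_\delta(\phi_N,c_N)\,dx = \int_\Omega \big[f'_{1,\delta}(\phi_N) + 1 - 2\phi_N - c_N\big]\,dx.
\]
The Lipschitz estimate $|f'_{1,\delta}(s)|\le\delta^{-1}|s-1/2|$, combined with the $L^\infty(0,T;L^2(\Omega))$ bounds on $\phi_N$ and $c_N$ from \eqref{3.nacN}--\eqref{3.phiN}, yields $|\bar\mu_N(t)|\le C(\delta)$ uniformly in $t$. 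Applying Poincar\'e--Wirtinger to $\mu_N-\bar\mu_N$ and combining with the gradient bound then gives \eqref{f2}.

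The only mildly delicate point is the use of a constant as test function, which requires $1\in X_N$; this is consistent with the Neumann eigenvalue $\lambda_0=0$ producing the constant eigenfunction and with the initial projection in \eqref{3.icN} being indexed from $k=0$. Beyond this bookkeeping I do not expect a genuine obstacle: the $\delta$- and $\eps$-dependence of $C$ is anticipated and will be absorbed later when we pass to the limits $N\to\infty$, $\eps\to 0$, and $\delta\to 0$ in turn.
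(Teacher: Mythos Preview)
Your proposal is correct and follows essentially the same route as the paper: split $\na\mu_N$ using \eqref{3.muN}--\eqref{3.sqrtcN} together with $[c_N]_+^\eps\le\eps^{-1/2}([c_N]_+^\eps)^{1/2}$ for the gradient bound, test \eqref{a15} with the constant eigenfunction and use the Lipschitz bound $|f'_{1,\delta}(s)|\le\delta^{-1}|s-1/2|$ plus \eqref{3.nacN}--\eqref{3.phiN} for the mean, and conclude by Poincar\'e--Wirtinger. The only cosmetic difference is that the paper records the mean bound as an $L^2(0,T)$ estimate rather than $L^\infty(0,T)$, but your pointwise-in-time version is in fact what the argument yields.
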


\begin{proof}
It follows from \eqref{3.muN}--\eqref{3.nacN} that
\begin{align*}
  \|\nabla \mu_N\|_{L^2(\Omega_T)}
  &\le \big\|\nabla\mu_N-[c_N]_+^\eps\nabla
  \partial_c f_\delta(\phi_N,c_N)\big\|_{L^2(\Omega_T)} \\
  &+ \big\|([c_N]_+^\eps)^{1/2}\big\|_{L^\infty(\Omega_T)}
  \big\|([c_N]_+^\eps)^{1/2}\nabla \partial_c
  f_\delta(\phi_N,c_N)\big\|_{L^2(\Omega_T)} \leq C(\eps). \nonumber
\end{align*}
It remains to derive an $L^2(\Omega)$ bound for $\mu_N$. By the property $f_{1,\delta}(1/2)=0$ and the Lipschitz continuity of $f'_{1,\delta}$,
\begin{align}\label{3.aux}
  |f'_{1,\delta}(\phi_N)|^2
  = |f'_{1,\delta}(\phi_N)-f'_{1,\delta}(1/2)|^2
  \leq C(\delta)|\phi_N-1/2|^2\leq C|\phi_N|^2 + C.
\end{align}
Then, taking the test function $e_1=1$ (which is the eigenvalue of $(-\Delta)_\Omega^s$ with associated eigenvalue $\lambda_1=0$) in \eqref{a15},
\begin{align}\label{3.aux2}
  \bigg|\int_\Omega\mu_N dx\bigg|
  &= \bigg|\int_\Omega\big(f'_{1,\delta}(\phi_N)
  - 2\phi_N- c_N + 1\big)dx\bigg| \\
  &\le \|f'_{1,\delta}(\phi_N)\|_{L^1(\Omega)}
  + C\|\phi_N\|_{L^1(\Omega)} + \|c_N\|_{L^1(\Omega)} + C
  \nonumber \\
  &\le C\|\phi_N\|_{L^2(\Omega)} + C \le C, \nonumber
\end{align}
where we used \eqref{3.aux} and \eqref{3.nacN}--\eqref{3.phiN}. It follows from the Poincar\'e--Wirtinger inequality that
\begin{align*}
  \|\mu_N\|_{L^2(\Omega_T)}
  \leq \|\mu_N-\overline{\mu}_N\|_{L^2(\Omega_T)}
  + \|\overline{\mu}_N\|_{L^2(\Omega_T)}
  \leq C\|\nabla\mu_N\|_{L^2(\Omega_T)} + C \leq C.
\end{align*}
This finishes the proof.
\end{proof}

We derive further estimates for $\phi_N$.

\begin{lemma}[Estimates for $\phi_N$]\label{lem.phiN}
There exists a constant $C>0$ independent of $N$ (but possibly depending on $\delta$ and $\eps$) such that
\begin{align}\label{f5}
  \|(-\Delta)^s_\Omega \phi_N\|_{L^2(\Omega_T)}
  + \sqrt{\delta}\|\Delta\phi_N\|_{L^2(\Omega_T)}\leq C.
\end{align}
\end{lemma}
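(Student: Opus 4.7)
The plan is to derive both estimates by testing the discrete identity \eqref{a15} for $\mu_N$ against two well-chosen elements of $X_N$, namely $(-\Delta)_\Omega^s\phi_N$ and $-\Delta\phi_N$. Both are admissible test functions because $\phi_N$ is a finite linear combination of the eigenfunctions $e_k$, so that spectral manipulations and integrations by parts are immediate, with boundary terms vanishing thanks to the Neumann condition encoded in $X_N$.

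For the first bound I would follow the blueprint \eqref{1.aux} sketched in the introduction. Taking $e=(-\Delta)_\Omega^s\phi_N$ in \eqref{a15} and splitting $\partial_\phi f_\delta=f'_{1,\delta}+\partial_\phi f_2$ with $\partial_\phi f_2(\phi_N,c_N)=1-2\phi_N-c_N$ places $\|(-\Delta)_\Omega^s\phi_N\|_{L^2(\Omega)}^2$ on the left and produces two further left-hand terms that I would argue are nonnegative. The contribution of $f'_{1,\delta}$ is handled by Lemma \ref{lem.pos} applied with $F=f_{1,\delta}$, which is admissible since $f_{1,\delta}$ is $C^2$, convex, vanishes with its derivative at $1/2$, and satisfies $|f''_{1,\delta}|\le 1/\delta$ (the $\delta$-dependent bound is acceptable because the constant in \eqref{f5} may depend on $\delta$). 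The contribution $\delta\int_\Omega\nabla\phi_N\cdot\nabla(-\Delta)_\Omega^s\phi_N\,dx$ equals a nonnegative spectral sum via the expansion of $\phi_N$ in the $e_k$. A weighted Young inequality on the right-hand side absorbs a fraction of $\|(-\Delta)_\Omega^s\phi_N\|_{L^2(\Omega)}^2$ into the left; the surviving quantities $\|\mu_N\|_{L^2}$, $\|\phi_N\|_{L^2}$, and $\|c_N\|_{L^2}$ are all controlled by Lemmas \ref{lem.est} and \ref{lem.muN}. Time integration then yields the first part of \eqref{f5}.

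For the $\sqrt{\delta}$-term I would test \eqref{a15} instead with $e=-\Delta\phi_N\in X_N$. One integration by parts converts the artificial-diffusion contribution into $\delta\|\Delta\phi_N\|_{L^2(\Omega)}^2$; the fractional quadratic term becomes a nonnegative spectral sum (of the form $\sum_k c_k^2\lambda_k^{1+s}$); and the singular part of the potential yields $\int_\Omega f'_{1,\delta}(\phi_N)(-\Delta\phi_N)\,dx=\int_\Omega f''_{1,\delta}(\phi_N)|\nabla\phi_N|^2\,dx\ge 0$, which is the classical local analogue of Lemma \ref{lem.pos} and follows directly from the convexity of $f_{1,\delta}$ after one integration by parts. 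The right-hand side pairs $\mu_N$ and $1-2\phi_N-c_N$ against $\Delta\phi_N$; a Young inequality with weights chosen so that the two $\|\Delta\phi_N\|_{L^2}^2$ contributions are absorbable into half of $\delta\|\Delta\phi_N\|_{L^2}^2$ on the left, followed by time integration and the bounds already in hand, gives $\sqrt{\delta}\|\Delta\phi_N\|_{L^2(\Omega_T)}\le C(\delta,\eps)$.

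The main obstacle is the invocation of Lemma \ref{lem.pos}: its hypothesis requires a uniform $L^\infty$ bound on $F''=f''_{1,\delta}$, which is only valid with the $\delta$-dependent constant $1/\delta$. This forces the estimate to come with a $\delta$-dependent constant, precisely as formulated in the statement. Passing eventually to the limit $\delta\to 0$ with a sharper, $\delta$-independent bound requires the alternative strategy sketched in Section \ref{sec.key} (in particular the approximation of $f'_1$ already absorbed in our $f'_{1,\delta}$) and is carried out in Section \ref{sec.delta}.
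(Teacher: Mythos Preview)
Your argument is correct but differs from the paper's. The paper does not split $\partial_\phi f_\delta$ or invoke Lemma~\ref{lem.pos} at this stage; since $\delta$-dependent constants are permitted, it simply bounds the \emph{entire} potential term $\partial_\phi f_\delta(\phi_N,c_N)$ in $L^2(\Omega_T)$ via the Lipschitz estimate \eqref{3.aux} for $f'_{1,\delta}$ (with constant $1/\delta$) together with the $L^2$ bounds on $\phi_N$ and $c_N$ from Lemma~\ref{lem.est}, and then applies a single Young inequality to $\mu_N$ and $\partial_\phi f_\delta(\phi_N,c_N)$ on the right-hand side for each of the two tests. Your route---splitting off the convex singular part and disposing of it by a sign argument (Lemma~\ref{lem.pos} for the fractional test, the identity $\int_\Omega f'_{1,\delta}(\phi_N)(-\Delta\phi_N)\,dx=\int_\Omega f''_{1,\delta}(\phi_N)|\nabla\phi_N|^2\,dx\ge 0$ for the Laplacian test)---is exactly the strategy the paper reserves for Section~\ref{sec.delta}, where a $\delta$-independent bound is required and a direct $L^2$ estimate on $f'_{1,\delta}(\phi_\delta)$ is not available a priori. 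So your approach works and anticipates the later argument, while the paper's is slightly more elementary here. One small caveat: Lemma~\ref{lem.pos} requires $F\in C^2(\R)$, whereas the text only asserts that $f'_{1,\delta}$ is Lipschitz; you are implicitly assuming the regularization from \cite{GGG17} is chosen smooth enough, which is harmless but should be stated.
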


\begin{proof}
Using the test function $e=(-\Delta)^s_\Omega\phi_N$ in \eqref{a15}, we obtain
\begin{align*}
  \int_\Omega |&(-\Delta)^s_\Omega\phi_N|^2dx
  + \delta\int_\Omega |(-\Delta)^{(1+s)/2}_\Omega\phi_N|^2dx\\
  &=\int_\Omega \mu_N(-\Delta)^s_\Omega\phi_Ndx
  -\int_\Omega\partial_\phi f_\delta(\phi_N,c_N)
  (-\Delta)^s_\Omega\phi_Ndx\\
  &\leq \frac{1}{2}\int_\Omega|(-\Delta)^s_\Omega \phi_N|^2dx
  + \int_\Omega|\mu_N|^2dx
  + \int_\Omega |\partial_\phi f_\delta(\phi_N,c_N)|^2dx.
\end{align*}
We derive similarly as in \eqref{3.aux2} a uniform $L^2(\Omega_T)$ bound for $\pa_\phi f_\delta(\phi_N,c_N)$. Then, together with the bound \eqref{f2} for $\mu_N$, we infer the first bound in \eqref{f5}. Similarly, the test function $e=\Delta\phi_N$ in \eqref{a15} yields the second bound in \eqref{f5}.
\end{proof}

The following lemma gives estimates for the time derivatives.

\begin{lemma}[Estimates for the time derivatives]\label{lem.time}
There exists a constant $C>0$ independent of $N$ such that
\begin{align}\label{a29}
  \|\partial_t\phi_N\|_{L^2(0,T;H^{1}(\Omega)')}
  + \|\partial_tc_N\|_{L^{2}(0,T;H^{1}(\Omega)')} \leq C.
\end{align}
\end{lemma}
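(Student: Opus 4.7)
\emph{Plan.} Both bounds follow by duality from the Galerkin equations \eqref{a13} and \eqref{a14}, combined with the a priori estimates of Lemma \ref{lem.est}. The standard tool is the $L^2$-orthogonal projection $P_N:L^2(\Omega)\to X_N$. Since $\partial_t\phi_N,\partial_tc_N\in X_N$, one has $\int_\Omega \partial_t\phi_N\,\psi\,dx=\int_\Omega \partial_t\phi_N\,P_N\psi\,dx$ for every $\psi\in H^1(\Omega)$, and similarly for $c_N$. The crucial property is that $P_N$ is a contraction in $H^1(\Omega)$: writing $\psi=\sum_{k\ge 0}(\psi,e_k)_{L^2}e_k$, orthogonality of the Neumann eigenfunctions in both $L^2$ and the Dirichlet form yields
\begin{align*}
  \|\nabla P_N\psi\|_{L^2(\Omega)}^2
  = \sum_{k=1}^N\lambda_k|(\psi,e_k)_{L^2(\Omega)}|^2
  \le \sum_{k=1}^\infty\lambda_k|(\psi,e_k)_{L^2(\Omega)}|^2
  = \|\nabla\psi\|_{L^2(\Omega)}^2,
\end{align*}
and together with $\|P_N\psi\|_{L^2(\Omega)}\le\|\psi\|_{L^2(\Omega)}$ this gives $\|P_N\psi\|_{H^1(\Omega)}\le\|\psi\|_{H^1(\Omega)}$.

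For $\partial_t\phi_N$, substitute $e=P_N\psi$ in \eqref{a13}, apply Cauchy--Schwarz, and use the upper bound $M(\phi_N)\le 1/\gamma$ to obtain
\begin{align*}
  \bigg|\int_\Omega\partial_t\phi_N\,\psi\,dx\bigg|
  \le \frac{1}{\gamma}\,\big\|\nabla\mu_N-[c_N]_+^\eps\nabla\partial_c f_\delta(\phi_N,c_N)\big\|_{L^2(\Omega)}\,\|\nabla\psi\|_{L^2(\Omega)}.
\end{align*}
Squaring, integrating in time and invoking \eqref{3.muN} gives the claimed bound for $\partial_t\phi_N$.

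For $\partial_t c_N$ the same duality argument applied to \eqref{a14} produces three contributions. Using the truncation $0\le[c_N]_+^\eps\le 1/\eps$, the positivity $0\le e^{-[\phi_N]_+^1}\le 1$, the upper bound on $M$, and Lemma \ref{lem.est}, I would estimate
\begin{align*}
  \big\|[c_N]_+^\eps M(\phi_N)(\nabla\mu_N-[c_N]_+^\eps\nabla\partial_c f_\delta)\big\|_{L^2(\Omega_T)}
  &\le \frac{1}{\eps\gamma}\,\big\|\nabla\mu_N-[c_N]_+^\eps\nabla\partial_c f_\delta\big\|_{L^2(\Omega_T)}, \\
  \big\|[c_N]_+^\eps e^{-[\phi_N]_+^1}\nabla\partial_c f_\delta\big\|_{L^2(\Omega_T)}
  &\le \eps^{-1/2}\,\big\|([c_N]_+^\eps)^{1/2}\nabla\partial_c f_\delta\big\|_{L^2(\Omega_T)}, \\
  \delta\|\nabla c_N\|_{L^2(\Omega_T)} &\le \sqrt{\delta}\cdot C,
\end{align*}
each of which is bounded by a constant $C=C(\eps,\delta)$ thanks to \eqref{3.muN}--\eqref{3.nacN}. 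Testing against $P_N\psi$ with $\psi\in H^1(\Omega)$ and using the $H^1$-contraction property of $P_N$ then yields the claimed $L^2(0,T;H^1(\Omega)')$ bound.

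\emph{Main obstacle.} There is no real obstacle: the only point requiring care is the $H^1$-contractivity of $P_N$, which is what legitimizes the duality argument for test functions that are merely in $H^1(\Omega)$ rather than in $X_N$. Once that is in hand, the estimates are a direct consequence of Lemma \ref{lem.est} and the explicit $\eps^{-1}$ and $\delta$-dependent upper bounds on the truncated coefficients.
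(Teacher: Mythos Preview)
Your proposal is correct and follows the same standard route the paper has in mind: the paper merely writes that the proof ``follows from Lemmas \ref{lem.est}--\ref{lem.phiN} in a similar way as the proof of Lemma 3.4 of \cite{JuLi24}'', which is precisely the duality argument via the $H^1$-stable Galerkin projection $P_N$ that you spell out. Your explicit verification of the $H^1$-contractivity of $P_N$ and the $\eps$- and $\delta$-dependent bounds on the three flux terms in \eqref{a14} is exactly what is needed, and the resulting constant is indeed independent of $N$ as required.
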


\begin{proof}
The proof follows from Lemmas \ref{lem.est}--\ref{lem.phiN} in a similar way as the proof of Lemma 3.4 of \cite{JuLi24}.
\end{proof}

\subsection{The limit $N\to\infty$}

The uniform estimates from Lemmas \ref{lem.est}--\ref{lem.time} imply the existence of a subsequence of $(\phi_N,c_N,\mu_N)$, which is not relabeled, such that, as $N\to\infty$,
\begin{align}
  & \phi_N\rightharpoonup^*\phi \quad\mbox{weakly* in }
  L^\infty(0,T;D^s(\Omega)\cap H^1(\Omega))
  \cap L^2(0,T;D^{2s}(\Omega)), \label{f6} \\
  & c_N\rightharpoonup c, \quad \mu_N\rightharpoonup \mu
  \quad\mbox{weakly in } L^2(0,T;H^1(\Omega)), \label{10} \\
  & \partial_t\phi_N\rightharpoonup \partial_t\phi, \quad
  \partial_tc_N\rightharpoonup \partial_tc
  \quad\mbox{weakly in }L^2(0,T;H^{1}(\Omega)'). \nonumber 
\end{align}
Moreover, by the Aubin--Lions lemma, up to subsequences,
\begin{align}
  \phi_N\to \phi &\quad\mbox{strongly in }
  C([0,T];L^2(\Omega)), \label{11} \\
  c_N\to c &\quad\mbox{strongly in }
  L^2(\Omega_T)\cap C([0,T];H^1(\Omega)'). \label{12}
\end{align}
In particular, again up to subsequences, $\phi_N\to\phi$ and $c_N\to c$ a.e.\ in $\Omega_T$. Then, since $M$, $\exp[\cdot]_+^1$, and $[\cdot]_+^\eps$ are bounded continuous functions, for any $1\le p<\infty$,
\begin{align*}
  M(\phi_N)\to M(\phi),\quad
  e^{-[\phi_N]_+^1}\to e^{-[\phi]_+^1},\quad
  [c_N]_+^\eps\to [c]_+^\eps
  \quad\mbox{strongly in } L^p(\Omega_T). 
\end{align*}
These convergences as well as the uniform bound \eqref{3.muN} imply that
\begin{align*}
  M(&\phi_N)\big(\nabla \mu_N-[c_N]_+^\eps
  (\nabla c_N-\nabla \phi_N) \big) \\
  &\to M(\phi)\big(\nabla \mu-[c]_+^\eps
  (\nabla c-\nabla \phi) \big)\quad\mbox{weakly in } L^2(\Omega_T). \nonumber
\end{align*}
We deduce from the a.e.\ convergences of $(\phi_N)$ and $(c_N)$ that $\pa_\phi f_\delta(\phi_N,c_N)\rightharpoonup\pa_\phi f_\delta(\phi,c)$ a.e.\ in $\Omega_T$ and then, because of the uniform bound for $\pa_\phi f_\delta(\phi_N,c_N)$ in $L^2(\Omega_T)$,
\begin{align}\label{a12}
  \partial_\phi f_\delta(\phi_N,c_N)\rightharpoonup \partial_\phi f_\delta(\phi,c) \quad\mbox{weakly in }L^2(\Omega_T).
\end{align}

The convergence results \eqref{f6}--\eqref{a12} allow us to perform the limit $N\to\infty$ in system \eqref{a13}--\eqref{a15} to infer that
the limit function $(\phi,c)$ solves system \eqref{3.approx1}--\eqref{3.approx2} in the sense of $L^2(0,T;H^1(\Omega)')$, and the limit function $\mu$ satisfies \eqref{3.approx3} a.e.\ in $\Omega_T$. It follows from \eqref{11} and $\phi_N(0)\to\phi_{0,\delta}$ strongly in $L^2(\Omega)$ that $\phi(0)=\phi_{0,\delta}$ in $\Omega$. Furthermore, we deduce from \eqref{12} that $\langle c_N(0),\xi\rangle_1\rightarrow\langle c(0),\xi\rangle_1$ for any $\xi\in H^1(\Omega)$,
recalling that $\langle\cdot,\cdot\rangle_1$ is the dual product between $H^1(\Omega)'$ and $H^1(\Omega)$. Then $c_N(0)\to c_0$ strongly in $H^1(\Omega)'$ implies that $c(0)=c_0$ in the sense of $H^1(\Omega)'$.

We wish to pass to the limit $N\to\infty$ in the energy equality \eqref{3.energy}. This is possible by the previous convergences and the weak lower semicontinuity of convex functions except of right-hand side of \eqref{3.energy}. Because of the $L^2(0,T;H^1(\Omega))$ bound for $\na\phi_N$ from \eqref{f5} and the $L^2(0,T;H^2(\Omega)')$ bound for $\pa_t\na\phi_N$ from \eqref{a29}, the Aubin--Lions lemma implies that (up to a subsequence) $\na\phi_N\to\na\phi$ strongly in $L^2(\Omega_T)$. Together with the weak convergence of $\na c_N$ in $L^2(\Omega_T)$ from \eqref{10}, we obtain
\begin{align*}
  \int_0^\tau\int_\Omega \nabla c_N\cdot\nabla\phi_Ndxdt
  \to \int_0^\tau\int_\Omega \nabla c\cdot\nabla\phi dxdt
  \quad\mbox{for }\tau>0.
\end{align*}
Therefore, the limit function $(\phi,c,\mu)$ satisfies that, for any $\tau\in (0,T)$,
\begin{align}\label{15}
  \int_\Omega&\bigg(\frac{1}{2}|(-\Delta)^{s/2}_\Omega\phi|^2
  + f_\delta(\phi,c) +\frac{\delta}{2}|\nabla\phi|^2\bigg)(x,\tau)dx\\
  &\phantom{xx}+ \int_0^\tau\int_\Omega M(\phi)
  \big|\nabla \mu - [c]_+^{\eps}\nabla\partial_c
  f_\delta(\phi,c)\big|^2dxdt \nonumber\\
  &\phantom{xx} + \int_0^\tau\int_\Omega [c]_+^\eps
  e^{-[\phi]^1_+}|\nabla\partial_c
  f_\delta(\phi,c)|^2dxdt
  + \delta\int_0^\tau\int_\Omega |\nabla c|^2
  dxdt \nonumber \\
  &\le \int_\Omega\bigg(\frac{1}{2}|(-\Delta)^{s/2}_\Omega
  \phi_{0,\delta}|^2 + f_\delta(\phi_{0,\delta},c_0) +\frac{\delta}{2}|\nabla\phi_{0,\delta}|^2\bigg)dx\nonumber\\
  &\phantom{xx}+ \delta\int_0^\tau\int_\Omega \nabla c
  \cdot\nabla \phi dxdt. \nonumber
\end{align}


\section{The limit $\eps\to 0$}\label{sec.eps}

Our goal of this section is to perform the limit $\eps\to 0$ in the weak formulation of \eqref{3.approx1}--\eqref{3.approx3} to remove the truncation. We denote by $(\phi_\eps,c_\eps,\mu_\eps)$  the solution constructed in the previous section.

First, we notice that the test function $c_\eps^-:=-\min\{0,c_\eps\}$ in the weak formulation of \eqref{3.approx2} shows that $c_\eps\ge 0$ in $\Omega_T$ (since $[c_\eps]_+^\eps c_\eps^- = 0$). Hence, we can replace the truncation $|c_\eps]_+^\eps$ by $[c_\eps]^\eps:=\min\{1/\eps,c_\eps\}$. By the same arguments as those used in the proofs of Lemma \ref{lem.est}, we infer from the energy inequality \eqref{15} that there exists a constant $C>0$ independent of $\eps$ such that
\begin{align}
  \big\|\nabla \mu_\eps - [c_\eps]^\eps\nabla\partial_cf_\delta
  (\phi_\eps,c_\eps)\big\|_{L^2(\Omega_T)}
  &\leq C, \label{ad5} \\
  \big\|\sqrt{[c_\eps]^\eps}\nabla\partial_c f_\delta(\phi_\eps,c_\eps)
  \big\|_{L^2(\Omega_T)}
  &\leq C, \label{ad6} \\
  \|c_\eps\|_{L^\infty(0,T;L^2(\Omega))}
  + \sqrt{\delta}\|\nabla c_\eps\|_{L^2(\Omega_T)}
  &\leq C, \label{ad7} \\
  \|\phi_\eps\|_{L^\infty(0,T;D^s(\Omega))}
  + \sqrt{\delta}\|\nabla\phi_\eps\|_{
  L^\infty(0,T;L^2(\Omega))}&\leq C. \nonumber 
\end{align}
In view of \eqref{ad6}--\eqref{ad7}, the sequence $([c_\eps]^\eps\na\pa_c f_\delta(\phi_\eps,c_\eps))$ is bounded in $L^2(0,T;$ $L^{4/3}(\Omega))$. The arguments in the proof of Lemma \ref{lem.muN} show that
\begin{align*}
  \|\mu_\eps\|_{L^2(0,T;W^{1,4/3}(\Omega))}\le C.
\end{align*}
Furthermore, we deduce from \eqref{ad5} that
\begin{align*}
  \|\partial_t\phi_\eps\|_{L^2(0,T;H^{1}(\Omega)')}
  \leq C. 
\end{align*}

The arguments for the limit $\eps\to 0$ are similar to those given in Section \ref{sec.approx}, except for the strong compactness of $[c_\eps]^\eps$. We only focus on this term. We have, by interpolation,
\begin{align}\label{f40}
  \|c_\eps\|_{L^4(0,T;L^3(\Omega))}
  \le \|c_\eps\|_{L^\infty(0,T;L^2(\Omega))}^{1/2}
  \|c_\eps\|_{L^2(0,T;L^6(\Omega))}^{1/2}\le C,
\end{align}
where we used the continuous embedding $H^1(\Omega)\hookrightarrow L^6(\Omega)$ for $d\le 3$. It follows for any $\psi\in L^4(0,T;W^{1,6}(\Omega))$ that
\begin{align*}
  \bigg|\int_0^T\int_\Omega \pa_t c_\eps\psi dxdt \bigg|
  &\le \|[c_\eps]^\eps\|_{L^4(0,T;L^3(\Omega))}
  \|M(\phi_\eps)\|_{L^\infty(\Omega_T)} \\
  &\phantom{xxxx}\times\big\|\na\mu_\eps-[c_\eps]^\eps \na\pa_c f_\delta(\phi_\eps,c_\eps)\big\|_{L^2(\Omega_T)}
  \|\na\psi\|_{L^4(0,T;L^6(\Omega))}\\
  &\phantom{xx} +\big\|\sqrt{[c_\eps]^\eps}\big\|_{L^4(0,T;L^3(\Omega))}
  \big\|e^{-[\phi_\eps]_+^1}\big\|_{L^\infty(\Omega_T)} \\
  &\phantom{xxxx}\times\big\|\sqrt{[c_\eps]^\eps}
  \na\pa_c f_\delta(\phi_\eps,c_\eps)\big\|_{L^2(\Omega_T)}
  \|\na\psi\|_{L^4(0,T;L^6(\Omega))}\\
  &\phantom{xx} +\delta\|\na c_\eps\|_{L^2(\Omega_T)}
  \|\na\psi\|_{L^2(\Omega_T)}
  \le C\|\psi\|_{L^4(0,T;W^{1,6}(\Omega))},
\end{align*}
which implies that
\begin{align*}
  \|\pa_t c_\eps\|_{L^{4/3}(0,T;W^{1,6}(\Omega)')}\le C.
\end{align*}
Together with the gradient bound \eqref{ad7} for $c_\eps$, the Aubin--Lions lemma yields the existence of a subsequence (not relabeled) such that $c_\eps\to c$ strongly in $C^0([0,T];H^1(\Omega)')\cap L^2(\Omega_T)$ and a.e.\ in $\Omega_T$ as $\eps\to 0$. Since
\begin{align*}
  \|[c_\eps]^\eps-c_\eps\|_{L^1(\Omega_T)}
  &= \int_0^T\int_{\{c_\eps\ge 1/\eps \}} (c_\eps-1/\eps)dxdt
  \le \int_0^T\int_{\{c_\eps\ge 1/\eps \}} c_\eps dxdt\\
  &\le \eps \int_0^T\int_\Omega c_\eps^2 dxdt\le C\eps\to 0,
\end{align*}
we have $[c_\eps]^\eps\to c$ a.e.\ in $\Omega_T$ and hence, due to \eqref{f40}, for any $p\in[1,4)$ and $q\in[1,3)$,
\begin{align*}
  [c_\eps]^\eps \to c \quad {\mbox{strongly in }}L^p(0,T;L^q(\Omega)).
\end{align*}
Proceeding as in Lemma \ref{lem.phiN}, we obtain uniform bounds for $\phi_\eps$ in $L^2(0,T;D^{2s}(\Omega))$ and for $\sqrt{\delta}\Delta\phi_\eps$ in $L^2(\Omega_T)$. Thus, in the limit $\eps\to 0$, the limit $\phi$ of $\phi_\eps$ satisfies
\begin{align}\label{3.phi}
  \phi\in L^2(0,T;D^{2s}(\Omega)), \quad
  \Delta\phi\in L^2(\Omega_T).
\end{align}

Now, we can pass to the limit $\eps\to 0$ in the weak formulation of \eqref{3.approx1}--\eqref{3.approx3} to deduce that the triplet $(\phi,c,\mu)$ (the limit of $(\phi_\eps,c_\eps,\mu_\eps)$) is a weak solution to
\begin{align}
  \partial_t\phi &= \diver\big(M(\phi)(\nabla\mu
  - c\nabla\partial_cf_\delta(\phi,c)) \big), \label{ad16} \\
  \partial_t c &= -\diver\big(c M(\phi) (\nabla\mu-c\nabla\partial_cf_\delta(\phi,c)) \big)
  + \diver\big(c e^{-[\phi]^1_+}\nabla\partial_cf_\delta(\phi,c) \big)
  + \delta\Delta c, \label{ad17} \\
  \mu &= (-\Delta)^s_\Omega\phi + \partial_\phi f_\delta(\phi,c)-\delta\Delta\phi, \label{ad18}
\end{align}
with the initial and boundary conditions \eqref{3.ic}--\eqref{3.bc}. We observe that, in view of \eqref{3.phi}, equation \eqref{ad18} holds a.e.\ in $\Omega_T$.  Furthermore, we deduce from \eqref{15} that the limit function satisfies the energy inequality
\begin{align}\label{ad15}
  \int_\Omega & \bigg(\frac{1}{2}|(-\Delta)^{s/2}_\Omega\phi|^2+ f_\delta(\phi,c)+\frac{\delta}{2}|\nabla\phi|^2
   \bigg)(x,\tau) dx\\
  &\phantom{xx}+ \int_0^\tau\int_\Omega M(\phi)\big|\nabla \mu
  - c\nabla\partial_c f_\delta(\phi,c)\big|^2 dxdt \nonumber\\
  &\phantom{xx} + \int_0^\tau\int_\Omega ce^{-[\phi]^1_+}
  |\nabla\partial_c f_\delta(\phi,c)|^2 dxdt
  + \delta\int_0^\tau\int_\Omega |\nabla c|^2 dxdt \nonumber \\
  &\le \int_\Omega\bigg(\frac{1}{2}
  |(-\Delta)^{s/2}_\Omega\phi_{0,\delta}|^2
  + f_\delta(\phi_{0,\delta},c_0)
  + \frac{\delta}{2}|\nabla\phi_{0,\delta}|^2
   \bigg)dx\nonumber\\
  &\phantom{xx}+\delta\int_0^\tau\int_\Omega \nabla c\cdot\nabla \phi dxdt. \nonumber
\end{align}


\section{The limit $\delta\to 0$}\label{sec.delta}

We perform the limit $\delta\to 0$ in system \eqref{ad16}--\eqref{ad18} to complete the proof of Theorem \ref{main}. Let $(\phi_\delta,c_\delta,\mu_\delta)$ be the solution to \eqref{ad16}--\eqref{ad18} with initial and boundary conditions \eqref{3.ic}--\eqref{3.bc}. We first derive estimates uniform in $\delta$, conclude convergence from compactness arguments, and pass to the limit $\delta\to 0$ in equations \eqref{ad16}--\eqref{ad18}.

\subsection{Estimates uniform in $\delta$}

Based on the energy inequality \eqref{ad15} and proceeding similarly as in Section \ref{sec.N}, we find that there exists a constant $C>0$ independent of $\delta$ such that
\begin{align}
  \|\phi_\delta\|_{L^\infty(0,T;D^s(\Omega))}
  +\sqrt{\delta}\|\phi_\delta\|_{L^\infty(0,T;H^1(\Omega))}
  &\leq C,\label{f11}\\
  \big\|\nabla \mu_\delta - c_\delta\nabla\partial_cf_\delta
  (\phi_\delta,c_\delta)\big\|_{L^2(\Omega_T)}
  &\leq C, \label{16} \\
  \|c_\delta\|_{L^\infty(0,T;L^2(\Omega))}
  + \sqrt{\delta}\|\nabla c_\delta\|_{L^2(\Omega_T)}
  &\leq C, \label{17} \\
  \|\sqrt{c_\delta}\nabla\partial_cf_\delta
  (\phi_\delta,c_\delta)\|_{L^2(\Omega_T)}
  +\|\nabla\mu_\delta\|_{L^2(0,T;L^{4/3}(\Omega))}
  &\leq C. \label{18}
\end{align}

In contrast to Section \ref{sec.eps}, the function $f'_{1,\delta}$ is not Lipschitz continuous with a constant independent of $\delta$. Thus, we need to find another way to obtain a uniform estimate for $\mu_\delta$. Note that this part of the proof is substantially different from \cite{JuLi24}, since in that paper, the mobility $M(\phi_\delta)$ is degenerate which excludes gradient bounds for $\mu_\delta$.

\begin{lemma}[Estimate for $\mu_\delta$]\label{lem.mudelta}
There exists a constant $C>0$ independent of $\delta$ such that
\begin{align}
  \|\mu_\delta\|_{L^2(0,T;W^{1,4/3}(\Omega))}\leq C.\label{f15}
\end{align}
\end{lemma}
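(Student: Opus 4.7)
The plan is to leverage that \eqref{18} already furnishes $\|\nabla\mu_\delta\|_{L^2(0,T;L^{4/3}(\Omega))}\le C$, so that by the Poincar\'e--Wirtinger inequality it suffices to control the spatial mean $\bar\mu_\delta(t):=|\Omega|^{-1}\int_\Omega\mu_\delta(\cdot,t)\,dx$ in $L^2(0,T)$; the Sobolev embedding $W^{1,4/3}(\Omega)\hookrightarrow L^2(\Omega)$ (valid since $d\le 3$) will be used in the auxiliary test step below.

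First I would integrate \eqref{ad18} over $\Omega$. Since the range of $(-\Delta)^s_\Omega$ lies in the zero-mean subspace of $L^2(\Omega)$ by its domain definition and $\int_\Omega\Delta\phi_\delta\,dx=0$ by the Neumann condition, this yields
\[
|\Omega|\,\bar\mu_\delta=\int_\Omega\bigl(f'_{1,\delta}(\phi_\delta)+1-2\phi_\delta-c_\delta\bigr)dx.
\]
The contributions of $\phi_\delta$ and $c_\delta$ are harmless because \eqref{f11} (together with $D^s(\Omega)\hookrightarrow L^2(\Omega)$) and \eqref{17} provide uniform $L^\infty(0,T;L^2(\Omega))$ bounds. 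The task therefore reduces to estimating $\|f'_{1,\delta}(\phi_\delta)\|_{L^2(0,T;L^1(\Omega))}$.

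The heart of the matter is a Kenmochi--Niezg\'odka-type lemma used in \cite{GGG17}: mass conservation for \eqref{ad16}, obtained by integrating the equation with the no-flux boundary conditions, gives $\overline{\phi_\delta}(t)=\overline{\phi}_{0,\delta}$, which sits uniformly in a compact subinterval of $(0,1)$ since $\overline{\phi}_{0,\delta}\to\overline\phi_0\in(0,1)$. Hence there exist constants $c_1,c_2>0$ independent of $\delta$ such that
\[
\|f'_{1,\delta}(\phi_\delta)\|_{L^1(\Omega)}\le c_1\int_\Omega f'_{1,\delta}(\phi_\delta)\bigl(\phi_\delta-\overline{\phi}_{0,\delta}\bigr)dx+c_2.
\]
The right-hand side I would bound by testing \eqref{ad18} with $\phi_\delta-\overline{\phi}_{0,\delta}$: the spectral-fractional term produces $\|(-\Delta)^{s/2}_\Omega\phi_\delta\|_{L^2(\Omega)}^2\ge 0$ and the $-\delta\Delta\phi_\delta$ term produces $\delta\|\nabla\phi_\delta\|_{L^2(\Omega)}^2\ge 0$, both of which can be dropped, leaving
\[
\int_\Omega f'_{1,\delta}(\phi_\delta)(\phi_\delta-\overline{\phi}_{0,\delta})dx\le\int_\Omega(\mu_\delta-\bar\mu_\delta)(\phi_\delta-\overline{\phi}_{0,\delta})dx-\int_\Omega(1-2\phi_\delta-c_\delta)(\phi_\delta-\overline{\phi}_{0,\delta})dx.
\]

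Finally I would combine Cauchy--Schwarz, Poincar\'e--Wirtinger, and the embedding $W^{1,4/3}(\Omega)\hookrightarrow L^2(\Omega)$ to get $\|\mu_\delta-\bar\mu_\delta\|_{L^2(\Omega)}\le C\|\nabla\mu_\delta\|_{L^{4/3}(\Omega)}$; together with the $L^\infty(0,T;L^2(\Omega))$ bounds on $\phi_\delta$ and $c_\delta$ this produces the pointwise estimate $\|f'_{1,\delta}(\phi_\delta)(t)\|_{L^1(\Omega)}\le C(1+\|\nabla\mu_\delta(t)\|_{L^{4/3}(\Omega)})$. Squaring, integrating in time, and invoking \eqref{18} yield $\|\bar\mu_\delta\|_{L^2(0,T)}\le C$, whence Poincar\'e--Wirtinger in $W^{1,4/3}$ closes the argument. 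The main obstacle I expect is the $\delta$-uniform Kenmochi--Niezg\'odka estimate: it relies on the monotone approximation $|f'_{1,\delta}|\nearrow|f'_1|$ built into the construction of $f_{1,\delta}$ and is precisely where the hypothesis $0<\overline\phi_0<1$ becomes indispensable.
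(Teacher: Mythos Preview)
Your proposal is correct and follows essentially the same route as the paper: test \eqref{ad18} with $\phi_\delta-\overline{\phi}_{0,\delta}$, drop the two nonnegative contributions from $(-\Delta)^s_\Omega$ and $-\delta\Delta$, invoke the Kenmochi--Niezg\'odka-type inequality from \cite{GGG17} (this is exactly \eqref{3.f2} in the paper), and close via Poincar\'e--Wirtinger together with $W^{1,4/3}(\Omega)\hookrightarrow L^2(\Omega)$. The paper presents the steps in a slightly different order but the ingredients are identical; your explicit invocation of mass conservation $\overline{\phi_\delta}(t)=\overline{\phi}_{0,\delta}$ is used only implicitly in the paper when replacing $\mu_\delta$ by $\mu_\delta-\bar\mu_\delta$.
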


\begin{proof}
We use the test function $\phi_\delta-\overline{\phi}_{0,\delta}\in L^2(0,T;H^1(\Omega))$ in the weak formulation of \eqref{ad18} to find that
\begin{align*}
  \int_\Omega& (-\Delta)^{s/2}_\Omega \phi_\delta
  (-\Delta)^{s/2}_\Omega(\phi_\delta-\overline{\phi}_{0,\delta})dx
  + \int_\Omega f'_{1,\delta}(\phi_\delta)
  (\phi_\delta-\overline{\phi}_{0,\delta})dx
  + \delta\int_\Omega |\nabla\phi_\delta|^2dx\\
  &= \int_\Omega\mu_\delta(\phi_\delta-\overline{\phi}_{0,\delta})dx
  - \int_\Omega \pa_\phi f_2(\phi_\delta,c_\delta)
  (\phi_\delta-\overline{\phi}_{0,\delta})dx \\
  &= \int_\Omega\mu_\delta(\phi_\delta-\overline{\phi}_{0,\delta})dx
  + \int_\Omega c_\delta(\phi_\delta-\overline{\phi}_{0,\delta})dx
  + \int_\Omega (2\phi_\delta-1)
  (\phi_\delta-\overline{\phi}_{0,\delta})dx.
\end{align*}
The first and last terms on the left-hand side are nonnegative. Therefore,
\begin{align}\label{f41}
  \int_\Omega f'_{1,\delta}(\phi_\delta)
  (\phi_\delta-\overline{\phi}_{0,\delta})dx
  &\le \int_\Omega\mu_\delta(\phi_\delta-\overline{\phi}_{0,\delta})dx
  + \int_\Omega c_\delta(\phi_\delta-\overline{\phi}_{0,\delta})dx \\
  &\phantom{xx}+ \int_\Omega (2\phi_\delta-1)
  (\phi_\delta-\overline{\phi}_{0,\delta})dx.\nonumber
\end{align}
We consider the first term on the right-hand side. Bound \eqref{f11} for $\phi_\delta$ and the continuous embedding $D^{s}(\Omega)\hookrightarrow L^2(\Omega)$ for any $s>0$ imply that $(\phi_\delta)$ is bounded in $L^\infty(0,T;L^2(\Omega))$. Consequently, taking into account the Poincar\'e--Wirtinger inequality \cite[Theorems 8.11--8.12]{LiLo01} (here, we need $d\le 4$ to guarantee the embedding $W^{1,4/3}(\Omega)\hookrightarrow L^2(\Omega)$) and the bound \eqref{18} for $\na\mu_\delta$,
\begin{align*}
  \int_\Omega\mu_\delta(\phi_\delta-\overline{\phi}_{0,\delta})dx
  &=\int_\Omega (\mu_\delta-\bar\mu_\delta)
  (\phi_\delta-\overline{\phi}_{0,\delta})dx\\
  &\leq \|\mu_\delta-\overline{\mu}_\delta\|_{L^{2}(\Omega)}
  \|\phi_\delta-\overline{\phi}_{0,\delta}\|_{L^2(\Omega)}
  \leq C\|\nabla\mu_\delta\|_{L^{4/3}(\Omega)}.\nonumber
\end{align*}
The remaining three terms on the right-hand side of \eqref{f41} are bounded by the $L^2(\Omega_T)$ norms of $\phi_\delta$ and $c_\delta$, which are bounded by \eqref{f11} and \eqref{17}. We conclude from \eqref{f41} that
\begin{align}\label{3.f1}
  \int_\Omega f'_{1,\delta}(\phi_\delta)
  (\phi_\delta-\bar\phi_{0\delta})dx
  \leq C\|\nabla\mu_\delta\|_{L^{4/3}(\Omega)} + C.
\end{align}

It is proved in \cite[p.~5270]{GGG17} that there exists $C>0$ independent of $\delta$ such that
\begin{align}\label{3.f2}
  \int_\Omega |f_{1,\delta}'(\phi_\delta)|dx
  \le \int_\Omega f_{1,\delta}'(\phi_\delta)(\phi_\delta-\overline{\phi}_{0,\delta})
  dx + C.
\end{align}
For the convenience of the reader, we recall the proof. Let $m_1,m_2\in(0,1)$ be such that $m_1\le 1/2\le m_2$ and $m_1<\overline{\phi}_{0,\delta}<m_2$. We set
\begin{align*}
  \eta_0=\min\{\overline{\phi}_{0,\delta}-m_1,
  m_2-\overline{\phi}_{0,\delta}\}, \quad
  \eta_1=\max\{\overline{\phi}_{0,\delta}-m_1,
  m_2-\overline{\phi}_{0,\delta}\},
\end{align*}
and we introduce the sets
\begin{align*}
  \Omega_0 = \{m_1\le\phi_\delta\le m_2\}, \quad
  \Omega_1 = \{\phi_\delta<m_1\}, \quad \Omega_2 = \{\phi_\delta>m_2\}.
\end{align*}
Since $f_{1,\delta}$ is convex, $f'_{1,\delta}$ is nondecreasing. Then it follows from $f_{1,\delta}'(1/2)=0$ that $f'_{1,\delta}(s)\le 0$ for $s\in(0,1/2)$ and $f'_{1,\delta}(s)\ge 0$ for $s\in(1/2,1)$. Consequently, $f'_{1,\delta}\le 0$ in $\Omega_1$ and $f'_{1,\delta}\ge 0$ in $\Omega_2$, and we infer that
\begin{align*}
  \eta_0\int_{\Omega_1}|f_{1,\delta}'(\phi_\delta)|dx
  &\le -\int_{\Omega_1}(\overline{\phi}_{0,\delta}-m_1)
  f'_{1,\delta}(\phi_\delta)dx
  \le \int_{\Omega_1}(\phi_\delta-\overline{\phi}_{0,\delta})
  f'_{1,\delta}(\phi_\delta)dx, \\
  \eta_0\int_{\Omega_2}|f_{1,\delta}'(\phi_\delta)|dx
  &\le \int_{\Omega_2}(m_2-\overline{\phi}_{0,\delta})
  f'_{1,\delta}(\phi_\delta)dx
  \le \int_{\Omega_2}(\phi_\delta-\overline{\phi}_{0,\delta})
  f'_{1,\delta}(\phi_\delta)dx.
\end{align*}
This yields
\begin{align*}
  \eta_0&\int_\Omega |f_{1,\delta}'(\phi_\delta)|dx
  = \eta_0\int_{\Omega_0}|f_{1,\delta}'(\phi_\delta)|dx
  + \eta_0\int_{\Omega_1}|f_{1,\delta}'(\phi_\delta)|dx
  + \eta_0\int_{\Omega_2}|f_{1,\delta}'(\phi_\delta)|dx \\
  &\le \eta_0\int_{\Omega_0}|f_{1,\delta}'(\phi_\delta)|dx
  + \int_{\Omega_1}(\phi_\delta-\overline{\phi}_{0,\delta})
  f'_{1,\delta}(\phi_\delta)dx
  + \int_{\Omega_2}(\phi_\delta-\overline{\phi}_{0,\delta})
  f'_{1,\delta}(\phi_\delta)dx \\
  &\le \eta_0\int_{\Omega_0}|f_{1,\delta}'(\phi_\delta)|dx
  - \int_{\Omega_0}(\phi_\delta-\overline{\phi}_{0,\delta})
  f'_{1,\delta}(\phi_\delta)dx
  + \int_{\Omega}(\phi_\delta-\overline{\phi}_{0,\delta})
  f'_{1,\delta}(\phi_\delta)dx \\
  &\le (\eta_0+\eta_1)\int_{\Omega_0}|f_{1,\delta}'(\phi_\delta)|dx
  + \int_{\Omega}(\phi_\delta-\overline{\phi}_{0,\delta})
  f'_{1,\delta}(\phi_\delta)dx \\
  &\le C + \int_{\Omega}(\phi_\delta-\overline{\phi}_{0,\delta})
  f'_{1,\delta}(\phi_\delta)dx,
\end{align*}
where the constant $C>0$ does not depend on $\delta$ since $0<\overline{\phi}_{0,\delta}<1$. This proves \eqref{3.f2}.

We conclude from \eqref{3.f1} and \eqref{3.f2} that
\begin{align*}
  \int_\Omega|f'_{1,\delta}(\phi_\delta)| dx
  \le C\|\na\mu_\delta\|_{L^{4/3}(\Omega)} + C.
\end{align*}
Hence, taking into account the $L^\infty(0,T;L^2(\Omega))$ bounds for $c_\delta$ and $\phi_\delta$ from \eqref{f11} and \eqref{17},
\begin{align*}
  \bigg|\int_\Omega\mu_\delta dx\bigg|
  &= \bigg|\int_\Omega\big(
  f'_{1,\delta}(\phi_\delta) - c_\delta + 1 -2\phi_\delta\big)
  dt\bigg| \le C\|\na\mu_\delta\|_{L^{4/3}(\Omega)} + C.
\end{align*}
It follows from the $L^2(0,T;L^{4/3}(\Omega))$ bound for $\na\mu_\delta$ in \eqref{18} that
\begin{align*}
  \|\overline{\mu}_\delta\|_{L^2(0,T)}
  \le C(\Omega)\bigg\|\int_\Omega\mu_\delta dx\bigg\|_{L^2(0,T)}
  \le C\|\na\mu_\delta\|_{L^2(0,T;L^{4/3}(\Omega))} + C(T) \le C.
\end{align*}
We conclude from the Poincar\'e--Wirtinger inequality that
\begin{align*}
  \|\mu_\delta\|_{L^2(0,T;L^{4/3}(\Omega))}
  &\le \|\mu-\overline{\mu}_\delta\|_{L^2(0,T;L^{4/3}(\Omega))}
  + \|\overline{\mu}_\delta\|_{L^2(0,T;L^{4/3}(\Omega))} \le C.
\end{align*}
Together with \eqref{18}, this finishes the proof.
\end{proof}

The following estimates improve \eqref{f11}.

\begin{lemma}[Estimates for $\phi_\delta$]
There exists a constant $C>0$ independent of $\delta$ such that
\begin{align}
  \|\phi_\delta\|_{L^2(0,T;D^{2s}(\Omega)}
  + \delta\|\Delta\phi_\delta\|_{L^2(\Omega_T)}\leq C.\label{f19}
\end{align}
\end{lemma}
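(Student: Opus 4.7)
My plan is to establish both bounds in \eqref{f19} by testing the weak formulation of \eqref{ad18} with two different choices of test function. For the first bound, I multiply \eqref{ad18} by $(-\Delta)^s_\Omega \phi_\delta$ (legitimate since $\phi_\delta \in L^2(0,T;D^{2s}(\Omega))$ is available from \eqref{3.phi} of the previous subsection) and integrate over $\Omega$. The singular contribution
\[
\int_\Omega f'_{1,\delta}(\phi_\delta)(-\Delta)^s_\Omega \phi_\delta\, dx \ge 0
\]
is nonnegative by Lemma \ref{lem.pos}, whose hypotheses (convexity of $f_{1,\delta}$, $f_{1,\delta}(1/2)=f'_{1,\delta}(1/2)=0$, and $|f''_{1,\delta}|\le 1/\delta$) are met for each fixed $\delta$. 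The artificial-diffusion term produces $\delta\|(-\Delta)^{(1+s)/2}_\Omega\phi_\delta\|_{L^2}^2\ge 0$, which I also discard. After absorbing $\tfrac12\|(-\Delta)^s_\Omega\phi_\delta\|_{L^2}^2$ via Young's inequality, the surviving right-hand side is controlled by $\|\mu_\delta\|_{L^2}^2 + C(\|\phi_\delta\|_{L^2}^2 + \|c_\delta\|_{L^2}^2 + 1)$. These are uniformly bounded in $\delta$: the $L^\infty(0,T;L^2(\Omega))$ bounds on $\phi_\delta$ and $c_\delta$ come from \eqref{f11} and \eqref{17}, while the $L^2(\Omega_T)$ bound on $\mu_\delta$ follows from Lemma \ref{lem.mudelta} combined with the embedding $W^{1,4/3}(\Omega)\hookrightarrow L^2(\Omega)$, which is valid for $d\le 4$. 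Integrating in time yields the $L^2(0,T;D^{2s}(\Omega))$ bound.

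For the second bound, I would test \eqref{ad18} with $-\Delta\phi_\delta\in L^2(\Omega_T)$, again available via \eqref{3.phi}. The cross term $\int_\Omega (-\Delta)^s_\Omega\phi_\delta\,(-\Delta\phi_\delta)\, dx = \|(-\Delta)^{(1+s)/2}_\Omega\phi_\delta\|_{L^2}^2 \ge 0$, and by integration by parts using the homogeneous Neumann condition together with the convexity of $f_{1,\delta}$, the singular term
\[
\int_\Omega f'_{1,\delta}(\phi_\delta)(-\Delta\phi_\delta)\, dx = \int_\Omega f''_{1,\delta}(\phi_\delta)|\nabla\phi_\delta|^2\, dx \ge 0
\]
is also nonnegative, so both can be discarded. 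The remaining terms, handled by Young's inequality with weight chosen so that $\tfrac{\delta}{2}\|\Delta\phi_\delta\|_{L^2}^2$ can be absorbed, lead to
\[
\delta\|\Delta\phi_\delta\|_{L^2}^2 \le \frac{C}{\delta}\big(\|\mu_\delta\|_{L^2}^2 + \|c_\delta\|_{L^2}^2 + \|\phi_\delta\|_{L^2}^2 + 1\big).
\]
Integrating in time and using the uniform $L^2(\Omega_T)$ bound on $\mu_\delta$ yields $\delta^2\|\Delta\phi_\delta\|_{L^2(\Omega_T)}^2 \le C$, which is the second claimed bound.

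The main obstacle is that Lemma \ref{lem.pos} requires $|f''_{1,\delta}|$ to be bounded, and the only available bound $1/\delta$ blows up as $\delta\to 0$. As emphasized in Section \ref{sec.key}, this is not a genuine obstruction: the conclusion of Lemma \ref{lem.pos} is only a qualitative positivity statement independent of the constant in its hypothesis, so once the nonnegative integral is discarded, the resulting inequality becomes quantitatively uniform in $\delta$. This is precisely why the smooth approximation $f_{1,\delta}$ of the Flory--Huggins singular part is indispensable at this step: the original potential $f_1'(\phi) = \log\phi-\log(1-\phi)$ lies outside the scope of Lemma \ref{lem.pos}, and only through the approximation can one extract the uniform $D^{2s}(\Omega)$ bound needed to pass to the limit $\delta\to 0$.
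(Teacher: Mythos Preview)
Your proof is correct and follows the paper's strategy: test \eqref{ad18} with $(-\Delta)^s_\Omega\phi_\delta$ for the first bound and with $-\Delta\phi_\delta$ for the second, drop the nonnegative contributions coming from the convex part $f_{1,\delta}$, and close via Young's inequality together with the embedding $W^{1,4/3}(\Omega)\hookrightarrow L^2(\Omega)$. The only difference is that the paper inserts an additional truncation $h_k(\phi_\delta)=\min\{k,\max\{-k,\phi_\delta\}\}$ before invoking Lemma~\ref{lem.pos}: it splits $f'_{1,\delta}(\phi_\delta)=f'_{1,\delta}(h_k(\phi_\delta))+\big[f'_{1,\delta}(\phi_\delta)-f'_{1,\delta}(h_k(\phi_\delta))\big]$, applies the lemma to the first piece, and shows the second piece vanishes in $L^2(\Omega)$ as $k\to\infty$ (for fixed $\delta$) via the Lipschitz bound on $f'_{1,\delta}$ and Markov's inequality. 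Your direct application of Lemma~\ref{lem.pos} with $F=f_{1,\delta}$ is cleaner and works provided $f_{1,\delta}\in C^2(\R)$ with $|f''_{1,\delta}|\le 1/\delta$; the paper's detour via $h_k$ is a technical precaution tied to the fact that only Lipschitz continuity of $f'_{1,\delta}$ (not $C^2$ regularity) is explicitly asserted in the construction of the approximation. Your observation that only the \emph{sign} delivered by Lemma~\ref{lem.pos} is used---so the $\delta$-dependence of the bound on $f''_{1,\delta}$ never enters the final estimate---is exactly the point.
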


\begin{proof}
We know that \eqref{ad18} holds a.e.\ in $\Omega_T$. Therefore, we can write
\begin{align}\label{f18}
  (-\Delta&)^s_\Omega\phi_\delta + f'_{1,\delta}(h_k(\phi_\delta))
  - \delta\Delta\phi_\delta\\
  &= \mu_\delta+c_\delta - 1 + 2\phi_\delta
  + f'_{1,\delta}(h_k(\phi_\delta)) - f'_{1,\delta}(\phi_\delta)
  \quad \mbox{a.e. in } \Omega_T,\nonumber
\end{align}
where
\begin{align*}
	h_k(z)=\begin{cases}
	-k&\mbox{for } z<-k,    \\
	z&\mbox{for } -k\leq z\leq k, \\
	k &\mbox{for }  z>k.
	\end{cases}
\end{align*}
The truncation $h_k(\phi_\delta)$ is needed to satisfy the conditions of Lemma \ref{lem.pos}. Since by construction, $f'_{1,\delta}$ is Lipschitz continuous with constant $1/\delta$, we have
\begin{align*}
  \|f'_{1,\delta}(h_k(\phi_\delta))
  - f'_{1,\delta}(\phi_\delta)\|_{L^2(\Omega)}
  \leq \frac{1}{\delta}\|h_k(\phi_\delta)-\phi_\delta\|_{L^2(\Omega)}.
\end{align*}
We claim that the right-hand side converges to zero as $k\to\infty$, for fixed $\delta>0$. Indeed, since $s>0$, there exists $p>1$ such that the embedding $D^s(\Omega)\hookrightarrow L^{2p}(\Omega)$ (with constant $C_p>0$) is continuous. Thus, using Markov's inequality,
\begin{align*}
  \|h_k(\phi_\delta)-\phi_\delta\|_{L^2(\Omega)}^2
  &= \int_{\{\phi_\delta<-k\}}|-k-\phi_\delta|^2 dx
  + \int_{\{\phi_\delta>k\}}|k-\phi_\delta|^2 dx \\
  &\le \int_{\{\phi_\delta<-k\}}|\phi_\delta|^2 dx
  + \int_{\{\phi_\delta>k\}}|\phi_\delta|^2 dx
  = \int_{\{|\phi_\delta|>k\}}|\phi_\delta|^2 dx \\
  &\le \frac{1}{k^{2(p-1)}}\int_\Omega|\phi_\delta|^{2p}dx
  \le \frac{C_p^{2p}}{k^{2(p-1)}}\|\phi_\delta\|_{D^s(\Omega)}^{2p}\to 0
  \quad\mbox{as }k\to\infty.
\end{align*}
It follows that $\|f'_{1,\delta}(h_k(\phi_\delta)) - f'_{1,\delta}(\phi_\delta)\|_{L^2(\Omega)}\to 0$ as $k\to\infty$ for any fixed $\delta>0$.

Next, we multiply \eqref{f18} by $(-\Delta)_\Omega^s\phi_\delta$ and take into account estimates \eqref{f11} and \eqref{17}:
\begin{align*}
  \int_\Omega &|(-\Delta)^s_\Omega\phi_\delta|^2dx
  + \int_\Omega f'_{1,\delta}(h_k(\phi_\delta))
  (-\Delta)^s_\Omega\phi_\delta dx
  + \delta \int_\Omega |(-\Delta)^{(1+s)/2}_\Omega\phi_\delta|^2dx \\
  &\leq \|(-\Delta)^s_\Omega \phi_\delta\|_{L^2(\Omega)}
  \big(\|\mu_\delta+c_\delta-1+2\phi_\delta\|_{L^2(\Omega)}
  + \|f'_{1,\delta}(h_k(\phi_\delta))
  -f'_{1,\delta}(\phi_\delta)\|_{L^2(\Omega)}\big)\\
  &\leq \frac{1}{2}\|(-\Delta)^s_\Omega\phi_\delta\|_{L^2(\Omega)}^2
  + C\|\mu_\delta\|_{W^{1,4/3}(\Omega)}^2 + C,
\end{align*}
where we have used the embedding $W^{1,4/3}(\Omega)\hookrightarrow L^2(\Omega)$ which holds for $d\le 4$. We know from Lemma \ref{lem.pos} that the second integral on the left-hand side is nonnegative (here, we need the truncation $h_k$). We conclude that $((-\Delta)_\Omega^s\phi_\delta)$ is bounded in $L^2(\Omega_T)$, which shows the first claim of \eqref{f19}.

We know that $\Delta\phi_\delta\in L^2(\Omega_T)$. Hence, we can multiply \eqref{ad18} by $\Delta\phi_\delta$, leading to
\begin{align*}
  \int_0^T& \int_\Omega|(-\Delta)^{(1+s)/2}_\Omega\phi_\delta|^2dxdt
  + \int_0^T\int_\Omega f''_{1,\delta}(\phi_\delta)
  |\nabla\phi_\delta|^2dxdt
  + \delta\int_0^T\int_\Omega (\Delta\phi_\delta)^2dxdt\\
  &\leq \frac{\delta}{2}\int_0^T\int_\Omega (\Delta\phi_\delta)^2dxdt
  + \frac{C}{\delta}\int_0^T\int_\Omega |\mu_\delta
  + c_\delta-1+2\phi_\delta|^2dxdt.
\end{align*}
By the $L^2(\Omega)$ bounds \eqref{f11} and \eqref{17} for $c_\delta$ and $\phi_\delta$ and the $W^{1,4/3}(\Omega)$ bound \eqref{f15} for $\mu_\delta$,
\begin{align*}
  \delta\int_0^T\int_\Omega (\Delta\phi_\delta)^2dxdt
  \le \frac{\delta}{2}\int_0^T\int_\Omega (\Delta\phi_\delta)^2dxdt
  + \frac{C}{\delta},
\end{align*}
which implies the second claim of \eqref{f19}.
\end{proof}

The following lemma gives some estimates for $f'_{1,\delta}(\phi_\delta)$ and $c_\delta$, which will be used later.

\begin{lemma}[Estimates for $f'_{1,\delta}(\phi_\delta)$ and $c_\delta$]
\label{lem.cdelta}
There exists a constant $C>0$ independent of $\delta$ such that
\begin{align}
  \|f'_{1,\delta}(\phi_\delta)\|_{L^2(\Omega_T)}
  + \|c_\delta^{3/2}\|_{L^2(0,T;W^{1,4/3}(\Omega))}
  + \|c_\delta\|_{L^{6}(0,T;L^{18/7}(\Omega))}
  &\leq C.\label{f28}
\end{align}
\end{lemma}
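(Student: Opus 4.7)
The plan is to extract each of the three bounds algebraically from equation \eqref{ad18} and the dissipation bound \eqref{18}, plugging in the uniform controls already assembled in \eqref{f11}, \eqref{17}, \eqref{18}, \eqref{f15}, and \eqref{f19}. The simplifying algebraic fact that drives everything is $\pa_\phi f_\delta(\phi,c)=f'_{1,\delta}(\phi)+1-2\phi-c$ and $\pa_c f_\delta(\phi,c)=c+(1-\phi)$, so in particular $\na\pa_c f_\delta(\phi_\delta,c_\delta)=\na c_\delta-\na\phi_\delta$. For the first bound I would solve \eqref{ad18} pointwise for $f'_{1,\delta}(\phi_\delta)$,
$$
  f'_{1,\delta}(\phi_\delta) = \mu_\delta - (-\Delta)_\Omega^s\phi_\delta + \delta\Delta\phi_\delta + 2\phi_\delta + c_\delta - 1\quad\text{a.e.\ in }\Omega_T,
$$
and observe that each term on the right is uniformly bounded in $L^2(\Omega_T)$: $\mu_\delta$ by \eqref{f15} together with the embedding $W^{1,4/3}(\Omega)\hookrightarrow L^2(\Omega)$ valid for $d\le 4$; $(-\Delta)_\Omega^s\phi_\delta$ and $\delta\Delta\phi_\delta$ directly by \eqref{f19}; and $\phi_\delta$, $c_\delta$ by \eqref{f11} and \eqref{17}.

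For the gradient bound on $c_\delta^{3/2}$, I would use the chain rule, justified because $c_\delta\ge 0$ and, for each fixed $\delta$, $c_\delta\in L^2(0,T;H^1(\Omega))$, to write $\na c_\delta^{3/2}=\tfrac{3}{2}\sqrt{c_\delta}\,\na c_\delta$ and then decompose
$$
  \sqrt{c_\delta}\,\na c_\delta = \sqrt{c_\delta}\,\na\pa_c f_\delta(\phi_\delta,c_\delta) + \sqrt{c_\delta}\,\na\phi_\delta.
$$
The first summand lies in $L^2(\Omega_T)$ uniformly by \eqref{18}, hence in $L^2(0,T;L^{4/3}(\Omega))$ since $\Omega$ is bounded. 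For the second, the hypothesis $s\ge 1/2$ gives the embedding $D^{2s}(\Omega)\hookrightarrow H^1(\Omega)$, so \eqref{f19} yields $\na\phi_\delta\in L^2(\Omega_T)$ uniformly; combined with $\sqrt{c_\delta}\in L^\infty(0,T;L^4(\Omega))$, which follows from \eqref{17}, H\"older produces $\sqrt{c_\delta}\,\na\phi_\delta\in L^2(0,T;L^{4/3}(\Omega))$. Together with $c_\delta^{3/2}\in L^\infty(0,T;L^{4/3}(\Omega))$, again a direct consequence of \eqref{17}, this delivers the desired uniform bound for $c_\delta^{3/2}$ in $L^2(0,T;W^{1,4/3}(\Omega))$.

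The final bound is pure interpolation. The Sobolev embedding $W^{1,4/3}(\Omega)\hookrightarrow L^{12/5}(\Omega)$ (valid for $d\le 3$) upgrades the previous step to $c_\delta^{3/2}\in L^2(0,T;L^{12/5}(\Omega))$, i.e.\ $c_\delta\in L^3(0,T;L^{18/5}(\Omega))$. Interpolating this with $c_\delta\in L^\infty(0,T;L^2(\Omega))$ at parameter $\theta=1/2$ in both time and space yields $c_\delta\in L^6(0,T;L^{18/7}(\Omega))$ with a uniform bound. The only genuine obstacle in the whole argument is ensuring that the splitting $\na\pa_c f_\delta = \na c_\delta-\na\phi_\delta$ is usable: this requires the $\delta$-independent control of $\na\phi_\delta$ in $L^2(\Omega_T)$, and this is precisely the place where the structural hypothesis $s\ge 1/2$ is indispensable.
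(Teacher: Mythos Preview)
Your proof is correct and follows essentially the same route as the paper: isolate $f'_{1,\delta}(\phi_\delta)$ from \eqref{ad18} and bound each right-hand term using \eqref{f11}, \eqref{17}, \eqref{f15}, \eqref{f19}; split $\sqrt{c_\delta}\,\na c_\delta$ via $\na\pa_c f_\delta(\phi_\delta,c_\delta)=\na c_\delta-\na\phi_\delta$ and control the pieces by \eqref{18} and the embedding $D^{2s}(\Omega)\hookrightarrow H^1(\Omega)$ for $s\ge 1/2$; then interpolate to get the final bound. The paper's proof is identical in structure and in the estimates invoked.
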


\begin{proof}
It follows from equation \eqref{ad18} that
\begin{align*}
  \|f'_{1,\delta}(\phi_\delta)\|_{L^2(\Omega_T)}
  &\le \|\mu_\delta\|_{L^2(\Omega_T)}
  + \|(-\Delta)_\Omega^s\phi_\delta\|_{L^2(\Omega_T)} \\
  &\phantom{xx}+ \|1-2\phi_\delta-c_\delta\|_{L^2(\Omega_T)}
  + \delta\|\Delta\phi_\delta\|_{L^2(\Omega_T)}.
\end{align*}
The right-hand side is uniformly bounded thanks to the $L^2(\Omega_T)$ bounds for $\phi_\delta$, $c_\delta$, and $\mu_\delta$, which follow from \eqref{f11}, \eqref{17}, and \eqref{f15}, respectively, and the bounds for $\phi_\delta$ from \eqref{f19}. This proves the first statement.

Next, we obtain
\begin{align*}
  \|\sqrt{c_\delta}&\na c_\delta\|_{L^2(0,T;L^{4/3}(\Omega))}
  \le \|\sqrt{c_\delta}\na(c_\delta+1-\phi_\delta)
  \|_{L^2(0,T;L^{4/3}(\Omega))}
  + \|\sqrt{c_\delta}\na\phi_\delta\|_{L^2(0,T;L^{4/3}(\Omega))} \\
  &\le C\|\sqrt{c_\delta}\na\pa_c f_\delta(\phi_\delta,c_\delta)
  \|_{L^{2}(\Omega_T)}
  + \|\sqrt{c_\delta}\|_{L^\infty(0,T;L^4(\Omega))}
  \|\na\phi_\delta\|_{L^{2}(\Omega_T)}.
\end{align*}
The right-hand side is uniformly bounded thanks to bounds \eqref{17}, \eqref{18} and bound \eqref{f19} for $\phi_\delta$. Indeed, the latter bound implies that $(\phi_\delta)$ is bounded in $L^2(0,T;D^{2s}(\Omega))$, which embeddes into $L^2(0,T;H^1(\Omega))$ if $s\ge 1/2$. It follows that $(\na c_\delta^{3/2})$ is bounded in $L^2(0,T;$ $L^{4/3}(\Omega))$. Since $(c_\delta^{3/2})$ is bounded in $L^\infty(0,T;L^{4/3}(\Omega))$ by \eqref{17}, we deduce the second bound in \eqref{f28}.

Finally, because of the continuous embedding $W^{1,4/3}(\Omega)\hookrightarrow L^{12/5}(\Omega)$ (for $d\leq 3$), the $L^2(0,T;W^{1,4/3}(\Omega))$ bound for $(c_\delta^{3/2})$ implies that $(c_\delta)$ is bounded in $L^3(0,T;L^{18/5}(\Omega))$. Hence, the interpolation inequality
\begin{align*}
  \|c_\delta\|_{L^{6}(0,T;L^{18/7}(\Omega))}
  \le C\|c_\delta\|_{L^{3}(0,T;L^{18/5}(\Omega))}^{1/2}
  \|c_\delta\|_{L^\infty(0,T;L^2(\Omega))}^{1/2}\le C
\end{align*}
concludes the proof.
\end{proof}

The final result concerns an estimate for the time derivatives.

\begin{lemma}[Estimates for the time derivatives]
There exists a constant $C$ independent of $\delta$ such that
\begin{align}
  \|\partial_tc_\delta\|_{L^{3/2}(0,T;W^{1,9}(\Omega)')}
  + \|\partial_t\phi_\delta\|_{L^2(0,T;H^1(\Omega)')}
  \leq C. \label{f20}
\end{align}
\end{lemma}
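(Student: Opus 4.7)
The plan is to estimate each time derivative by testing the weak formulation of the corresponding equation against smooth functions and controlling the resulting integrals by the uniform bounds already established.

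For $\partial_t\phi_\delta$, I would take an arbitrary $\psi_1\in H^1(\Omega)$ in the weak formulation of \eqref{ad16}. The only term on the right-hand side is
\begin{align*}
  \int_\Omega M(\phi_\delta)\big(\nabla\mu_\delta
  - c_\delta\nabla\partial_cf_\delta(\phi_\delta,c_\delta)\big)
  \cdot\nabla\psi_1\,dx,
\end{align*}
which, since $M$ is bounded by $1/\gamma$ and the flux belongs to $L^2(\Omega_T)$ uniformly in $\delta$ by \eqref{16}, is controlled by $C\|\nabla\psi_1\|_{L^2(\Omega)}$ in $L^2(0,T)$. This yields the second estimate in \eqref{f20}.

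For $\partial_tc_\delta$, I would take $\psi_2\in L^3(0,T;W^{1,9}(\Omega))$ (the predual of $L^{3/2}(0,T;W^{1,9}(\Omega)')$) and estimate the three contributions on the right-hand side of \eqref{ad17} separately. The first contribution
\begin{align*}
  \int_0^T\int_\Omega c_\delta M(\phi_\delta)
  \big(\nabla\mu_\delta-c_\delta\nabla\partial_c
  f_\delta(\phi_\delta,c_\delta)\big)\cdot\nabla\psi_2\,dx\,dt
\end{align*}
is handled by H\"older's inequality with exponents $(6,2,3)$ in time and $(18/7,2,9)$ in space, which indeed satisfy $1/6+1/2+1/3=1$ and $7/18+1/2+1/9=1$; the relevant factors are bounded by \eqref{f28} and \eqref{16}. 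For the second contribution
\begin{align*}
  \int_0^T\int_\Omega c_\delta e^{-[\phi_\delta]_+^1}
  \nabla\partial_cf_\delta(\phi_\delta,c_\delta)
  \cdot\nabla\psi_2\,dx\,dt,
\end{align*}
I would split $c_\delta=\sqrt{c_\delta}\cdot\sqrt{c_\delta}$ and use the pointwise H\"older estimate
\begin{align*}
  \big|c_\delta\nabla\partial_cf_\delta\cdot\nabla\psi_2\big|
  \le \|\sqrt{c_\delta}\|_{L^4(\Omega)}
  \|\sqrt{c_\delta}\nabla\partial_cf_\delta\|_{L^2(\Omega)}
  \|\nabla\psi_2\|_{L^4(\Omega)},
\end{align*}
combined with the embedding $L^3(0,T;L^9(\Omega))\hookrightarrow L^2(0,T;L^4(\Omega))$, the $L^\infty(0,T;L^2(\Omega))$ bound on $c_\delta$ in \eqref{17}, and the $L^2(\Omega_T)$ bound on $\sqrt{c_\delta}\nabla\partial_cf_\delta$ in \eqref{18}. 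The third contribution
\begin{align*}
  \delta\int_0^T\int_\Omega\nabla c_\delta\cdot\nabla\psi_2\,dx\,dt
  = \sqrt{\delta}\int_0^T\int_\Omega
  (\sqrt{\delta}\nabla c_\delta)\cdot\nabla\psi_2\,dx\,dt
\end{align*}
is controlled by $\sqrt{\delta}\le 1$ times the uniform $L^2(\Omega_T)$ bound from \eqref{17}, paired with $\|\nabla\psi_2\|_{L^2(\Omega_T)}$, which is in turn dominated by $\|\nabla\psi_2\|_{L^3(0,T;L^9(\Omega))}$. Collecting the three bounds gives the desired estimate on $\partial_tc_\delta$ in $L^{3/2}(0,T;W^{1,9}(\Omega)')$.

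The main obstacle is verifying that the H\"older exponents work out for the most singular contribution, namely the convective term $c_\delta M(\phi_\delta)(\nabla\mu_\delta-c_\delta\nabla\partial_cf_\delta)\cdot\nabla\psi_2$: this is precisely the reason the preceding lemma established the $L^6(0,T;L^{18/7}(\Omega))$ bound for $c_\delta$ in \eqref{f28}, which is just enough to balance the $L^2(\Omega_T)$ bound on the flux against a test function in $L^3(0,T;W^{1,9}(\Omega))\hookrightarrow L^3(0,T;L^9(\Omega))$. Once the exponent check is verified, the remaining computations are routine applications of H\"older's inequality and the a priori bounds.
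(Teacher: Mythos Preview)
Your proposal is correct and follows essentially the same approach as the paper: test the weak formulations against $\psi_1\in L^2(0,T;H^1(\Omega))$ and $\psi_2\in L^3(0,T;W^{1,9}(\Omega))$, split the right-hand side of \eqref{ad17} into three contributions, and control each by H\"older's inequality using the bounds \eqref{16}--\eqref{18} and \eqref{f28}. For the convective term you use the same exponents $(6,2,3)$ in time and $(18/7,2,9)$ in space as the paper, and for the $\delta$-regularization term you proceed identically.

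The only slight deviation is in the second (transport) term: the paper uses the bound $\sqrt{c_\delta}\in L^{12}(0,T;L^{36/7}(\Omega))$ coming from \eqref{f28} and pairs it with $\nabla\psi_2\in L^{12/5}(0,T;L^{36/11}(\Omega))$, whereas you use the simpler $\sqrt{c_\delta}\in L^\infty(0,T;L^4(\Omega))$ from \eqref{17} and pair it with $\nabla\psi_2\in L^2(0,T;L^4(\Omega))$. Both choices work, and yours is arguably more economical since it does not require the refined integrability from Lemma~\ref{lem.cdelta} for this term. One small phrasing issue: the displayed inequality you call a ``pointwise H\"older estimate'' should have $\int_\Omega c_\delta\nabla\partial_cf_\delta\cdot\nabla\psi_2\,dx$ on the left, not the integrand itself; it is H\"older in space for fixed $t$, not a pointwise bound.
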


\begin{proof}
We use the bounds \eqref{16} for $\na\mu_\delta-c_\delta\na\pa_c f_\delta(\phi_\delta,c_\delta)$ in $L^2(\Omega_T)$, \eqref{18} for $\sqrt{c_\delta}\na\pa_c f_\delta(\phi_\delta,$ $c_\delta)$ in $L^2(\Omega_T)$, and \eqref{f28} for $c_\delta$ in $L^6(0,T;L^{18/7}(\Omega))$ to infer from \eqref{ad17} that for any $\psi\in L^3(0,T;W^{1,9}(\Omega))$,
\begin{align*}
  \bigg|\int_0^T&\int_\Omega\partial_t c_\delta \psi dxdt \bigg|
  \leq \|c_\delta\|_{L^{6}(0,T;L^{18/7}(\Omega))}
  \|M(\phi_\delta)\|_{L^\infty(\Omega_T)} \\
  &\phantom{xx}\times\|\nabla\psi\|_{L^3(0,T;L^{9}(\Omega))}
  \|\nabla\mu_\delta-c_\delta\nabla \partial_cf_\delta
  (\phi_\delta,c_\delta)\|_{L^2(\Omega_T)} \\
  &+ \big\|e^{-[\phi_\delta]_+^1}\big\|_{L^\infty(\Omega_T)}
  \|\sqrt{c_\delta}\|_{L^{12}(0,T;L^{36/7}(\Omega))} \\
  &\phantom{xx}\times\|\sqrt{c_\delta}\nabla\partial_c
  f_\delta(\phi_\delta,c_\delta)\|_{L^2(\Omega_T)}
  \|\nabla\psi\|_{L^{12/5}(0,T;L^{36/11}(\Omega))} \\
  &+ \delta\|\nabla c_\delta\|_{L^2(\Omega_T)}
  \|\nabla\psi\|_{L^2(\Omega_T)} \leq C\|\psi\|_{L^{3}(0,T;W^{1,9}(\Omega))}.
\end{align*}
This finishes the proof the first claim of \eqref{f20}.
Similarly, we obtain the second claim of \eqref{f20}.
\end{proof}


\subsection{Limit $\delta\to 0$}

In this section, we perform the limit $\delta\to 0$ in the weak formulation of \eqref{ad16}--\eqref{ad18}. We deduce from bounds \eqref{f11}, \eqref{f15}, and \eqref{f19} that there exist subsequences (not relabeled) such that, as $\delta\to 0$,
\begin{align*}
 \phi_\delta\rightharpoonup^*\phi &\quad\mbox{weakly* in }
  L^\infty(0,T;D^s(\Omega))\cap L^2(0,T;D^{2s}(\Omega)),
  \nonumber \\ 
  \na\phi_\delta \rightharpoonup \na\phi &\quad \mbox{weakly in }
  L^2(\Omega_T), \nonumber \\ 
  \mu_\delta\rightharpoonup \mu &\quad \mbox{weakly in }
  L^2(\Omega_T)\cap L^2(0,T;W^{1,4/3}(\Omega)).
  \nonumber 
\end{align*}
In view of the compact embedding $D^s(\Omega)\hookrightarrow L^2(\Omega)$ for $s>0$, the bounds \eqref{f11} and \eqref{f20} for $\phi_\delta$ allow us to apply the Aubin--Lions lemma to infer that, up to a subsequence,
\begin{align*}
  \phi_\delta\rightarrow \phi\quad\mbox{strongly in }
  C([0,T];L^2(\Omega)).
\end{align*}
This implies that
\begin{align*}
  M(\phi_\delta)\rightarrow M(\phi),\quad
  e^{-[\phi_\delta]^1_+}\rightarrow e^{-[\phi]^1_+}
  \quad\mbox{strongly in } L^p(\Omega_T) \mbox{ for }p\in [1,\infty).
\end{align*}

Next, we prove that $0<\phi<1$ a.e.\ in $\Omega_T$. In \cite[Lemma 5.3]{JuLi24}, this follows from the degeneracy of $M(\phi)$. Since the mobility is not degenerate in our case, we exploit the singularity as in \cite[p.~5273]{GGG17}. For the convenience of the reader, we present the full proof.

\begin{lemma}[Upper and lower bounds for $\phi$]
The limit function $\phi$ satisfies $0<\phi<1$ a.e.\ in $\Omega_T$.
\end{lemma}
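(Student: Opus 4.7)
The plan is to argue by contradiction, exploiting the uniform $L^2(\Omega_T)$ bound on $f'_{1,\delta}(\phi_\delta)$ from Lemma \ref{lem.cdelta} together with the singularity of $f_1'$ at $\phi=0$ and $\phi=1$. The strong convergence $\phi_\delta \to \phi$ in $C([0,T];L^2(\Omega))$ established in the previous step yields, up to a subsequence, $\phi_\delta \to \phi$ a.e.\ in $\Omega_T$.

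Define the exceptional set $A = \{(x,t) \in \Omega_T : \phi(x,t) \notin (0,1)\}$ and suppose for contradiction $|A|>0$. Split $A = A^- \cup A^+$ with $A^- = \{\phi \le 0\}$ and $A^+ = \{\phi \ge 1\}$; by symmetry it suffices to rule out $|A^-|>0$. On the interior $\{\phi<0\}$, the a.e.\ convergence implies that $\phi_\delta(x,t) < \phi(x,t)/2 < 0$ eventually. By convexity of $f_{1,\delta}$, $f'_{1,\delta}$ is nondecreasing, and combined with $f'_{1,\delta}(1/2)=0$, this yields $f'_{1,\delta}(\phi_\delta) \le f'_{1,\delta}(\phi/2)<0$, so $|f'_{1,\delta}(\phi_\delta)| \ge |f'_{1,\delta}(\phi/2)|$. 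The Moreau--Yosida type construction of $f_{1,\delta}$ borrowed from \cite{GGG17} forces $|f'_{1,\delta}(s)| \to +\infty$ as $\delta \to 0$ for every fixed $s \notin (0,1)$, so the lower bound diverges. The boundary case $\phi(x,t)=0$ is handled analogously, using that $f'_{1,\delta}(0) \to -\infty$ together with the monotonicity of $f'_{1,\delta}$; the case $\phi(x,t)=1$ is symmetric.

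Once the pointwise divergence $|f'_{1,\delta}(\phi_\delta(x,t))|^2 \to +\infty$ is established a.e.\ on $A^-$, Fatou's lemma gives
\begin{align*}
  \int_{A^-}\liminf_{\delta\to 0}|f'_{1,\delta}(\phi_\delta)|^2\,dxdt
  \le \liminf_{\delta\to 0}\int_{\Omega_T}|f'_{1,\delta}(\phi_\delta)|^2\,dxdt \le C,
\end{align*}
where the last inequality is the uniform bound from \eqref{f28}. The left-hand side equals $+\infty$ whenever $|A^-|>0$, a contradiction; hence $|A^-|=0$. The symmetric argument at $\phi=1$ gives $|A^+|=0$, and we conclude $0<\phi<1$ a.e.\ in $\Omega_T$.

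The main obstacle lies in the endpoint case $\phi \in \{0,1\}$: for points strictly outside $[0,1]$ the blow-up of $|f'_{1,\delta}(\phi_\delta)|$ follows directly from monotonicity and the divergence of $|f'_{1,\delta}(s)|$ for fixed $s \notin [0,1]$, but on the level set $\{\phi=0\}$ (or $\{\phi=1\}$) one cannot separate $\phi_\delta$ from the singular point uniformly in $\delta$, and pointwise divergence must be extracted from the specific structure of the Moreau--Yosida approximation at the boundary of the effective domain, exactly as in \cite[p.~5273]{GGG17}.
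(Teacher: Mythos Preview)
Your Fatou-based strategy is sound in spirit and differs from the paper's level-set argument, but the endpoint step does not work as written. For a point with $\phi(x,t)=0$ you invoke ``$f'_{1,\delta}(0)\to-\infty$ together with the monotonicity of $f'_{1,\delta}$''. Monotonicity yields $|f'_{1,\delta}(\phi_\delta)|\ge|f'_{1,\delta}(0)|$ only when $\phi_\delta\le 0$; nothing prevents $\phi_\delta(x,t)$ from approaching $0$ from inside $(0,1/2)$, and then monotonicity gives the \emph{wrong} inequality $|f'_{1,\delta}(\phi_\delta)|\le|f'_{1,\delta}(0)|$, so no pointwise divergence follows. Your final paragraph concedes the difficulty but then cites \cite[p.~5273]{GGG17}, which is precisely the level-set/Chebyshev argument the paper reproduces, not a pointwise-divergence statement compatible with your Fatou scheme. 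A related minor issue: the divergence $|f'_{1,\delta}(s)|\to\infty$ for fixed $s\notin(0,1)$ is not among the properties listed in the paper (only $|f'_{1,\delta}|\nearrow|f'_1|$ on $(0,1)$ is stated), so even the interior case leans on an unverified feature of the construction.

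The paper sidesteps all of this by working with the thickened sets $E^\eta=\{\phi<\eta\}\cup\{\phi>1-\eta\}$: a Chebyshev-type estimate gives $\mbox{meas}(E^\eta_\delta)\le C/\min\{f'_{1,\delta}(1-\eta),-f'_{1,\delta}(\eta)\}$, where the denominator is evaluated at \emph{fixed} interior points of $(0,1)$ on which $|f'_{1,\delta}|\nearrow|f'_1|$ applies directly; only after $\delta\to0$ does one send $\eta\to0$ and use the singularity of $f'_1$. Your argument can in fact be repaired by the same two-parameter idea: on $\{\phi=0\}$, for any fixed $\eta\in(0,1/2)$ eventually $\phi_\delta<\eta$, whence by monotonicity $|f'_{1,\delta}(\phi_\delta)|\ge|f'_{1,\delta}(\eta)|\to|f'_1(\eta)|$, and letting $\eta\to0$ afterwards gives the divergence needed for Fatou. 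But this step is missing from your proposal.
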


\begin{proof}
Let $\eta$ be a positive constant such that $1/2\in (\eta,1-\eta)$. Define the sets
\begin{align*}
  E_\delta^\eta &:= \{(x,t)\in \Omega_T:\phi_\delta(x,t)>1-\eta
  \mbox{ or }\phi_\delta(x,t)<\eta \}, \\
  E^{\eta} &:= \{(x,t)\in \Omega_T:\phi(x,t)>1-\eta \mbox{ or }
  \phi(x,t)<\eta \}.
\end{align*}
It follows from $\phi_\delta\rightarrow \phi$ a.e.\ in $\Omega_T$ and Fatou's lemma that
\begin{align}\label{3.auxp}
  \mbox{meas}(E^\eta)\leq \liminf_{\delta\to 0^+}
  \mbox{meas}(E_\delta^\eta).
\end{align}
Recall that estimate \eqref{f28} gives a uniform bound for $f'_{1,\delta}(\phi_\delta)$ in $L^1(\Omega_T)$. Since $f'_{1,\delta}(x)\le 0$ for $x\in (0,1/2]$, $f'_{1,\delta}(x)\geq 0$ for $x\in [1/2,1)$, and $f'_{1,\delta}$ is nondecreasing, we have
\begin{align*}
  C\geq \|f'_{1,\delta}(\phi_\delta)\|_{L^1(\Omega_T)}
  \geq \int_{E_\delta^\eta}|f'_{1,\delta}(\phi_\delta)|dxdt
  \geq \min\{f'_{1,\delta}(1-\eta),-f'_{1,\delta}(\eta) \}
  \mbox{meas}(E_\delta^\eta),
\end{align*}
which implies that
\begin{align*}
  \mbox{meas}(E_\delta^\eta)
  \leq \frac{C}{\min\{f'_{1,\delta}(1-\eta),-f'_{1,\delta}(\eta) \}}.
\end{align*}
We pass to the limit $\delta\rightarrow 0$ in the previous inequality, using \eqref{3.auxp}:
\begin{align*}
 \mbox{meas}(E^\eta)
  &\leq \liminf_{\delta\rightarrow 0}\mbox{meas}(E_\delta^\eta)
  \le \frac{C}{\min\{f'_1(1-\eta),-f'_1(\eta) \}}.
\end{align*}
Finally, the limit $\eta\to 0$ yields
\begin{align*}
  \mbox{meas}\{(x,t)\in\Omega_T:\phi\geq 1 \mbox{ or } \phi\leq 0 \}=0,
\end{align*}
which completes the proof of the lemma.
\end{proof}

The previous lemma shows that $f'_1(\phi)$ is well-defined. Then, in view of the a.e.\ convergence of $\phi_\delta$, we have $f'_{1,\delta}(\phi_\delta)\to f'_1(\phi)$ a.e.\ in $\Omega_T$ as $\delta\to 0$. The $L^2(\Omega_T)$ bound of $f'_{1,\delta}(\phi_\delta)$ in \eqref{f28} implies that
\begin{align*}
  f'_{1,\delta}(\phi_\delta)\rightharpoonup f'_1(\phi)
  \quad\mbox{weakly in }L^2(\Omega_T).
\end{align*}

Next, we apply the Aubin--Lions lemma to $(c_\delta)$, taking into account the bounds in \eqref{f28} and \eqref{f20}, to obtain (up to a subsequence)
\begin{align*}
  c_\delta\rightarrow c\quad \mbox{strongly in }L^2(\Omega_T).
\end{align*}
It follows from the bounds for $c_\delta$ in \eqref{f28} that
\begin{align*}
  c_\delta\rightarrow c\quad &\mbox{strongly in }L^p(0,T;L^q(\Omega))
  \mbox{ for }p\in [1,6),\; q\in [1,18/7) \\ 
  c_\delta^{3/2}\rightarrow c^{3/2}\quad &\mbox{strongly in }
  L^p(0,T;L^q(\Omega))\mbox{ for }p\in [1,2),\; q\in [1,12/5).
\end{align*}
Therefore, for $\psi\in C_0^\infty(\Omega_T;\R^d)$,
\begin{align*}
  \int_0^T\int_\Omega & c_\delta\nabla c_\delta\cdot \psi dxdt
  = -\frac{1}{2}\int_0^T\int_\Omega c_\delta^{2}\diver\psi dxdt \\
  &\rightarrow -\frac{1}{2}\int_0^T\int_\Omega c^{2}\diver\psi dxdt
  = \int_0^T\int_\Omega c\nabla c\cdot\psi dxdt,
\end{align*}
which yields $c_\delta\na c_\delta\to c\na c$ in the sense of distributions. Actually, $\sqrt{c_\delta}\na c_\delta$ is uniformly bounded in $L^2(0,T;L^{4/3}(\Omega))$ (see the proof of Lemma \ref{lem.cdelta}) and $\sqrt{c_\delta}$ is uniformly bounded in $L^{12}(0,T;L^{36/7}(\Omega))$ (see \eqref{f28}). Hence, $(c_\delta\na c_\delta)$ is bounded in $L^{12/7}(0,T;L^{18/17}(\Omega))$, which implies that
\begin{align*}
  c_\delta\na c_\delta\rightharpoonup c\na c
  \quad\mbox{weakly in }L^{12/7}(0,T;L^{18/17}(\Omega)).
\end{align*}

The previous estimates and convergences show that
\begin{align*}
  M(\phi_\delta)\nabla\mu_\delta\rightharpoonup M(\phi)\na\mu
  &\quad\mbox{weakly in }L^1(\Omega_T), \\
  M(\phi_\delta)c_\delta\na c_\delta \rightharpoonup M(\phi)c\na c
  &\quad\mbox{weakly in }L^1(\Omega_T), \\
  M(\phi_\delta)c_\delta\na\phi_\delta\rightharpoonup M(\phi)c\na\phi
  &\quad\mbox{weakly in }L^1(\Omega_T).
\end{align*}
Hence, we deduce from bound \eqref{16} that
\begin{align*}
  M(\phi_\delta)&\big(\nabla\mu_\delta
  - c_\delta(\nabla c_\delta-\nabla\phi_\delta)\big) \\
  &\rightharpoonup M(\phi)\big(\nabla\mu-c(\nabla c-\nabla\phi)\big)
  \quad\mbox{weakly in }L^2(\Omega_T).
\end{align*}
Moreover, we have
\begin{align*}
  c_\delta M(\phi_\delta)&\big(\nabla\mu_\delta
  - c_\delta(\nabla c_\delta-\nabla\phi_\delta)\big) \\
  &\rightharpoonup cM(\phi)\big(\nabla\mu-c(\nabla c-\nabla\phi)\big)
  \quad\mbox{weakly in }L^1(\Omega_T).
\end{align*}
Estimates \eqref{f11} and \eqref{17} for $\sqrt{\delta}\na\phi_\delta$ and $\sqrt{\delta}\na c_\delta$ in $L^2(\Omega_T)$ give, for any $\psi\in C_0^\infty(\Omega_T)$,
\begin{align*}
  \delta\int_0^T\int_\Omega \Delta\phi_\delta \psi dxdt
  &\le\sqrt{\delta}\big(\sqrt{\delta}\|\na\phi_\delta\|_{L^2(\Omega_T)}
  \big)\|\na\psi\|_{L^2(\Omega_T)}
  \le C\sqrt{\delta}\rightarrow 0, \\
  \delta\int_0^T\int_\Omega \Delta c_\delta\psi dxdt
  &\le \sqrt{\delta}\big(\sqrt{\delta}\|\na c_\delta\|_{L^2(\Omega_T)}\big)
  \|\na\psi\|_{L^2(\Omega_T)}
  \le C\sqrt{\delta}\rightarrow 0.
\end{align*}
Thus, based on the previous convergence results, we can pass to the limit $\delta\to 0$ in the weak formulation of \eqref{ad16}--\eqref{ad18}
for $(\phi_\delta,c_\delta,\mu_\delta)$ to deduce that the triplet $(\phi,c,\mu)$ is a weak solution to \eqref{m1}--\eqref{m3}. This finishes the proof of Theorem \ref{main}.


\end{document}